
\documentclass[12pt]{amsart}

\vfuzz2pt 

 \newtheorem{thm}{Theorem}[section]
 \newtheorem{cor}[thm]{Corollary}
 \newtheorem{lem}[thm]{Lemma}
 \newtheorem{prop}[thm]{Proposition}
 
 \theoremstyle{definition}
 
 \theoremstyle{remark}
 
 \numberwithin{equation}{section}

 \newcommand{\Real}{\mathbb{R}}

 \newcommand{\MTW}{\text{MTW}}
 \newcommand{\Hess}{\mathrm{Hess}}

 \newcommand{\Athreew}{{\bf (A3w)}\,}

\begin{document}

\title[Computable Necessary Conditions for Smooth Optimal Map]
{New Computable Necessary Conditions for the Regularity Theory of Optimal Transportation}

\author{Paul W.Y. Lee}
\email{plee@math.berkeley.edu}
\address{Department of Mathematics, University of California at Berkeley, 970 Evans Hall \#3840 Berkeley, CA 94720-3840 USA}

\thanks{The author was supported by the NSERC postdoctoral fellowship.}

\date{\today}

\begin{abstract}
We give new computable necessary conditions for a class of optimal transportation problems to have smooth solutions. 
\end{abstract}

\maketitle

\section{Introduction}

Let $\mu$ and $\nu$ be two Borel probability measures on the manifold $M$ and let $c:M\times M\longrightarrow\Real$ be a cost function. In the optimal transportation problem, one looks for a Borel map which minimizes the following total cost among all Borel maps $\varphi:M\longrightarrow M$ which push $\mu$ forward to $\nu$:
\[
\int_Mc(x,\varphi(x))d\mu(x)
\]
Here the push forward $\varphi_{*}\mu$ of a measure $\mu$ by a Borel map $\varphi$ is the measure defined by $\varphi_{*}\mu(U)=\mu(\varphi^{-1}(U))$ for all Borel sets $U\subseteq M$. 

Under some mild assumptions on the cost $c$ and the measures $\mu,\nu$, the above problem has a unique solution \cite{Br,Mc,BeBu,FaFi,AgLe,FiRi}. This unique solution is called the optimal map. There are various recent breakthroughs in understanding the regularity of the optimal map \cite{MaTrWa,TrWa,Lo1,Lo2,KiMc1}. The most important one is the introduction of a geometric object called the Ma-Trudinger-Wang (MTW) curvature. After the work of \cite{MaTrWa,TrWa,Lo1}, it is clear that certain non-negativity condition on the MTW curvature, called MTW condition, is necessary for the regularity theory of optimal maps. However, if the cost is not given by an explicit formula, then it is very hard to compute the MTW curvature and the MTW condition. When the cost $c$ is given by square of a Riemannian distance, the following is the only known computable condition which is necessary for the MTW condition. 

\begin{thm}\cite{Lo1}\label{Loeper}
Let $d$ be a Riemannian distance function on the manifold $M$ and assume that the cost $c$ is given by $c=d^2$. Then the MTW curvature satisfies 
\[
\MTW(u,0,w)=K(u,w)(|u|^2|w|^2-\left<u,w\right>^2)
\]
where $K(u,w)$ is the sectional curvature of the plane spanned by $u$ and $w$.  

In particular if the cost $c$ satisfies the weak MTW condition \Athreew, then the sectional curvature is non-negative. 
\end{thm}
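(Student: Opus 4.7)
The plan is to reduce the MTW curvature at $(u,0,w)$ to a fourth-order mixed derivative of the cost along two geodesics meeting at the base point, and then to invoke the classical Taylor expansion of the squared Riemannian distance to extract the sectional curvature.

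First I would recall the definition of $\MTW$ in the cost-exponential picture: at a point $(x,y)$ with $y=\exp^c_x(p)$, and for tangent vectors $v,w$ (at $y$ in the c-exponential chart) and $u$ at $x$,
\[
\MTW(u,v,w)=-\tfrac{3}{2}\,\partial^2_{ss}\big|_{s=0}\partial^2_{tt}\big|_{t=0}\,c\big(\exp_x(tu),\,\exp^c_x(p+sw)\big),
\]
up to the normalization convention used in the paper. The hypothesis $v=0$ corresponds to $p=0$, so $y=x$. For $c=d^2$ the $c$-exponential coincides with the Riemannian exponential (since $-\nabla_x c(x,y)=2\exp_x^{-1}(y)$), so the formula reduces to computing
\[
F(s,t):=d^2\bigl(\exp_x(tu),\exp_x(sw)\bigr)
\]
and reading off its $s^2t^2$-coefficient.

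Next I would apply the classical Taylor expansion of the squared Riemannian distance between two points on geodesics issuing from $x$:
\[
F(s,t)=\lvert tu-sw\rvert^2-\tfrac{1}{3}\bigl\langle R(tu,sw)sw,\,tu\bigr\rangle+O\bigl(\lvert(s,t)\rvert^5\bigr),
\]
which one derives either from Jacobi field comparison along the two geodesics or from successive differentiation of $\exp$. The coefficient of $s^2t^2$ is then $-\tfrac{1}{3}\langle R(u,w)w,u\rangle$, and by the definition of sectional curvature this equals $-\tfrac{1}{3}K(u,w)\bigl(\lvert u\rvert^2\lvert w\rvert^2-\langle u,w\rangle^2\bigr)$. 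Multiplying by the normalization constant from the definition of $\MTW$ should yield exactly
\[
\MTW(u,0,w)=K(u,w)\bigl(\lvert u\rvert^2\lvert w\rvert^2-\langle u,w\rangle^2\bigr).
\]

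The second assertion is then immediate: under \Athreew, one has $\MTW(u,0,w)\ge0$ for all orthogonal pairs $u\perp w$, for which the bracketed factor is $\lvert u\rvert^2\lvert w\rvert^2\ge0$; choosing $u,w$ orthonormal spanning an arbitrary two-plane forces $K(u,w)\ge0$.

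The main obstacle I anticipate is purely bookkeeping: pinning down the exact normalization constant in the definition of $\MTW$ used here and matching it against the factor $-\tfrac{1}{3}$ in the distance expansion (these conventions differ from source to source and a sign or factor-of-two mismatch would spoil the identification). The geometric content, on the other hand, is packaged entirely in the standard fourth-order term of the Taylor expansion of $d^2$, which is where the Riemann tensor first enters.
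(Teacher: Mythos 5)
Your argument is correct, and it is essentially Loeper's original computation, but it is not the route this paper takes. The paper obtains Theorem~\ref{Loeper} as the special case $V\equiv 0$ of Theorem~\ref{main1-1}: starting from the Jacobi-field representation $\MTW(u,v,w)=\tfrac32\partial_s^2\langle U,\partial_\tau J\rangle\big|_{\tau=s=0}$ of \cite{LeMc}, one integrates the Jacobi equation twice in $\tau$ to get the double-integral formula of Theorem~\ref{curvature-curvature}, and then evaluates it at $v=0$ using $\gamma_s(\tau)=\exp_x(\tau sw)$ and $J\big|_{s=0}=(1-\tau)u$, which gives $\tfrac32\int_0^1\int_0^{\bar\tau}2(1-\tau)\langle R(w,u)w,u\rangle\,d\tau\,d\bar\tau=\langle R(w,u)w,u\rangle$. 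Your shortcut---reading the $s^2t^2$-coefficient off the classical near-diagonal expansion of $d^2$---packages that same Jacobi-field computation into a citation; it is perfectly adequate for this zeroth-order statement at $v=0$, but it does not extend to the mechanical case or to the first- and second-order terms in $v$ that the paper needs later, which is why the paper sets up the integral formula instead. On the bookkeeping point you flag: the discrepancy is real but harmless. With the prefactor $-\tfrac32$ and the cost taken literally as $d^2$, your expansion yields $2\langle R(u,w)w,u\rangle$; the prefactor is calibrated to the action cost $c=\tfrac12 d^2$ of Section~\ref{background} (Lagrangian $\tfrac12|v|^2$), for which one gets exactly $\langle R(u,w)w,u\rangle=K(u,w)(|u|^2|w|^2-\langle u,w\rangle^2)$. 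Since \Athreew is invariant under positive rescaling of the cost, the corollary on non-negative sectional curvature is unaffected either way. One small correction: in this paper $\exp^c$ is defined via the initial velocity of the curve of least action, so $\exp^c=\exp$ holds by definition in the Riemannian case; the gradient identity $-\nabla_xc(x,y)=2\exp_x^{-1}(y)$ you invoke would, under the usual Kim--McCann convention, actually give $\exp^c_x(p)=\exp_x(p/2)$ for $c=d^2$, so it is not the right justification here.
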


The purpose of this paper is twofold. First, we consider cost functions arising from natural mechanical systems. More precisely, let $\left<\cdot,\cdot\right>$ be a Riemannian metric on the manifold $M$ and let $|\cdot|$ be the corresponding norm. Let $V:M\longrightarrow \Real$ be a smooth function on the manifold $M$, called the potential, and let $L:TM\to\Real$ be the Lagrangian defined by $L(x,v)=\frac{1}{2}|v|^2-V(x)$. The cost functions that we are interested in are given by 
\begin{equation}\label{cost}
c(x,y)=\inf\int_0^1L(\gamma(t),\dot\gamma(t))dt,
\end{equation}
where the infimum is taken over all smooth curves $\gamma(\cdot)$ satisfying $\gamma(0)=x$ and $\gamma(1)=y$. 

In the first part of the paper, we give computable necessary conditions for the cost defined in (\ref{cost}) to satisfy the weak MTW condition \Athreew (see Theorem \ref{main1}). The following is a simple corollary of Theorem \ref{main1}.  
\begin{thm}\label{main1-1}
Let $x$ be a maximum point of the potential $V$. Assume that $V$ also satisfy the following
\[
\nabla V_x=0 \quad \text{and}\quad \Hess V_x=0.
\]
Let $u$ and $w$ be two tangent vectors based at $x$. Then the MTW curvature for the cost $c$ defined in (\ref{cost}) satisfies 
\[
\MTW(u,0,w)=\left<R(w,u)w,u\right>+\frac{1}{20}\left<\nabla^2_{w}\nabla_{u}\nabla V_x,u\right>.
\]

In particular if the cost $c$ satisfies the weak MTW condition \Athreew, then the following holds
\[
\left<R(w,u)w,u\right>+\frac{1}{20}\left<\nabla^2_{w}\nabla_{u}\nabla V_x,u\right>\geq 0
\] 
for all orthogonal pairs $(u,w)$ of tangent vectors $\left<u,w\right>=0$. 
\end{thm}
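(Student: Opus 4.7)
The plan is to derive this formula as a direct specialization of Theorem \ref{main1}, the main technical result of the paper. Theorem \ref{main1} must express $\MTW(u,0,w)$ for the mechanical cost (\ref{cost}) as a sum of a purely Riemannian contribution (coming from the kinetic term $\frac{1}{2}|v|^2$) and correction terms built from $\nabla V_x$, $\Hess V_x$, and the third covariant derivative $\nabla^3 V_x$, possibly coupled to the Riemann tensor. The hypothesis that $x$ is a maximum of $V$ with $\nabla V_x=0$ and $\Hess V_x=0$ plays two roles: it ensures that the constant trajectory $\gamma(t)\equiv x$ minimizes the action on $[0,1]$ (the second variation reduces to the positive-definite kinetic form $\int_0^1 \frac{1}{2}|\dot\eta|^2\,dt$ on admissible variations), so the $c$-exponential map is well-defined and smooth near the diagonal at $x$; and it annihilates every term in the formula of Theorem \ref{main1} that carries a factor of $\nabla V_x$ or $\Hess V_x$.

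After this elimination, only two kinds of terms can remain. The purely Riemannian piece coincides with the MTW curvature for $V\equiv 0$, in which case the cost reduces to $\frac{1}{2}d^2$; Theorem \ref{Loeper} (with the rescaling between $d^2$ and $\frac{1}{2}d^2$) identifies this piece with $K(u,w)(|u|^2|w|^2-\langle u,w\rangle^2)$, which equals $\langle R(w,u)w, u\rangle$ by the definition of sectional curvature together with the curvature sign convention used in the paper. Among the $V$-dependent terms, only those that are linear in $\nabla^3 V_x$ and carry no further factor of $V$ can survive; the symmetries of the third covariant derivative, combined with the tensorial structure imposed by Theorem \ref{main1}, force any such surviving expression to be a scalar multiple of $\langle \nabla^2_w \nabla_u \nabla V_x, u\rangle$, and the coefficient $\frac{1}{20}$ is read off directly from Theorem \ref{main1}.

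The concluding inequality is then immediate from the definition of \Athreew, which demands $\MTW(u,0,w)\geq 0$ for every orthogonal pair $(u,w)$. The principal obstacle in actually executing this plan lies not in the corollary itself but in the proof of Theorem \ref{main1}: the explicit formula, and in particular the specific coefficient $\frac{1}{20}$, can only be produced by a careful fourth-order Taylor expansion of the action-minimizing trajectory in both endpoint variations, followed by substitution into the definition of the MTW tensor with close attention to the combinatorial factors entering the mixed partial derivatives of $c$.
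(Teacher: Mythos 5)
Your overall strategy---specialize Theorem \ref{main1} at a critical point with vanishing Hessian---is exactly the paper's, but the execution has a genuine gap: you never actually extract the formula from Theorem \ref{main1}, and you misdescribe where the coefficient $\frac{1}{20}$ comes from. The statement of Theorem \ref{main1} contains no such coefficient to ``read off''; it is a double-integral formula
\[
\MTW(u,0,w)=\frac{3}{2}\int_0^1\!\!\int_0^{\bar\tau}\Bigl(2\left<R(\partial_\tau\bar w,\tilde u)\partial_\tau\bar w,u\right>+\left<\Hess V_x(\tilde u),\textstyle\int_0^\tau R(\partial_\tau\bar w,\bar w)u\,d\tau\right>+\left<\nabla^2_{\bar w}\nabla_{\tilde u}\nabla V_x,u\right>\Bigr)d\tau\, d\bar\tau,
\]
in which $\bar w$ and $\tilde u$ solve the ODE $\partial_\tau^2(\cdot)=-\Hess V_x(\cdot)$ with the stated initial/boundary data. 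The hypothesis $\Hess V_x=0$ is used for more than killing the middle term: it linearizes the ODE so that $\bar w=\tau w$ and $\tilde u=(1-\tau)u$, and only after this substitution do the integrands become $2(1-\tau)\left<R(w,u)w,u\right>$ and $\tau^2(1-\tau)\left<\nabla^2_w\nabla_u\nabla V_x,u\right>$; the coefficients then come from computing $\frac{3}{2}\int_0^1\int_0^{\bar\tau}2(1-\tau)\,d\tau\,d\bar\tau=1$ and $\frac{3}{2}\int_0^1\int_0^{\bar\tau}\tau^2(1-\tau)\,d\tau\,d\bar\tau=\frac{3}{2}\cdot\frac{1}{30}=\frac{1}{20}$. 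Your proposed symmetry argument to ``force'' the surviving potential term into the shape $\left<\nabla^2_w\nabla_u\nabla V_x,u\right>$ is unnecessary (the term already appears verbatim in Theorem \ref{main1}) and would not by itself produce a numerical coefficient. Note also that this term involves the \emph{fourth} covariant derivative of $V$, not $\nabla^3V_x$ as you write.

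Two smaller points. First, identifying the Riemannian piece via Theorem \ref{Loeper} introduces a normalization issue you do not address: with $V\equiv 0$ the cost (\ref{cost}) is $\frac{1}{2}d^2$, not $d^2$, so invoking Theorem \ref{Loeper} requires tracking how $\MTW$ and $\exp^c$ rescale under $c\mapsto \lambda c$; the paper sidesteps this entirely by evaluating the integral directly. Second, your discussion of why the constant curve minimizes the action and why $c$ is smooth at $(x,x)$ belongs to the proof of Theorem \ref{main1} (where the paper carries it out via the Jacobi equation $\partial_\tau^2 J+\Hess V_x(J)=0$ and non-degeneracy of $d\exp^c$), not to the corollary, so it is fine to defer it---but the integral evaluation above cannot be deferred, since it is the entire content of the deduction of Theorem \ref{main1-1} from Theorem \ref{main1}.
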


We remark that the condition $\Hess V_x=0$ is not completely necessary and it can be replaced by a more complicated condition (see Theorem \ref{main1}). Note that when the potential $V\equiv 0$, the cost $c$ is given by the square of the corresponding Riemannian distance $d$ and Theorem \ref{main1-1} reduces to Theorem \ref{Loeper}. As a corollary of Theorem \ref{main1-1}, we have the following. 

\begin{cor}
Let $A$ be a $n\times n$ matrix satisfying 
\[
(\left<Au,w\right>+\left<Au,w\right>)^2+2\left<Au,u\right>\left<Aw,w\right>>0
\]
for a pair of vectors $(u,w)$ in $\Real^n$ which are orthogonal $\left<u,w\right>=0$. 

Let $V:\Real^n\to\Real$ be a potential satisfying
\[
V(x)=-\left<Ax,x\right>^2+O(|x|^5)\quad \text{as} \quad |x|\to 0
\]
and let $L$ be the Lagrangian defined by $L(x,v)=\frac{1}{2}|v|^2-V(x)$ where $|\cdot|$ is the Euclidean norm. 

Then the MTW curvature for the cost $c$ defined in (\ref{cost}) does not satisfy the weak MTW condition \Athreew. 
\end{cor}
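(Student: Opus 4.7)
The plan is to apply Theorem \ref{main1-1} at the origin $x = 0$. Since $V(x) = -\left<Ax, x\right>^2 + O(|x|^5)$ vanishes to fourth order at $0$, both $\nabla V_0 = 0$ and $\Hess V_0 = 0$ hold automatically. The leading quartic $-\left<Ax, x\right>^2$ is manifestly nonpositive, so the origin is a local maximum of the quartic part; if the $O(|x|^5)$ correction prevents $0$ from being a genuine local maximum of $V$ itself, then since the MTW formula depends only on the $4$-jet of $V$ at the base point one can freely replace $V$ by a smooth function that agrees with $V$ to fourth order at $0$ and attains its global maximum there, without affecting any subsequent computation.

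Once Theorem \ref{main1-1} applies, the flatness of the Euclidean metric kills the Riemann-tensor term, leaving
\[
\MTW(u, 0, w) = \frac{1}{20}\left<\nabla^2_{w}\nabla_{u}\nabla V_0, u\right>.
\]
This is a linear functional of the fourth-order Taylor coefficients of $V$ at $0$, so the remainder $O(|x|^5)$ contributes nothing and only the quartic $-\left<Ax, x\right>^2$ matters. Writing $-\left<Ax, x\right>^2 = -\tfrac{1}{4}\left<Sx, x\right>^2$ with $S := A + A^{T}$, the concrete calculation I would perform is: first compute $\nabla V(x) = -\left<Sx, x\right>\,Sx$ modulo $O(|x|^4)$; then take three further directional derivatives (once in $u$, twice in $w$) and evaluate at $0$; finally translate $S$-inner products back to $A$-inner products using $\left<Sv, v\right> = 2\left<Av, v\right>$ and $\left<Su, w\right> = \left<Au, w\right> + \left<Aw, u\right>$.

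The resulting expression is
\[
\MTW(u, 0, w) = -\frac{1}{5}\Bigl[\bigl(\left<Au, w\right> + \left<Aw, u\right>\bigr)^2 + 2\left<Au, u\right>\left<Aw, w\right>\Bigr],
\]
which by the hypothesis on $A$ is strictly negative for some orthogonal pair $(u, w)$, directly contradicting \Athreew. The main obstacle I anticipate is purely bookkeeping: tracking the combinatorial constants through the four successive partial derivatives of a quartic polynomial and through the $A \mapsto S$ symmetrization; there is no conceptual difficulty, and the one mildly delicate point is the technical workaround needed to ensure the maximum hypothesis of Theorem \ref{main1-1} holds at the origin despite the possibly sign-indefinite $O(|x|^5)$ tail.
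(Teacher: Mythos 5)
The paper gives no explicit proof of this corollary, but your argument is exactly the intended one: apply Theorem \ref{main1-1} at $x=0$ with the flat metric, so the curvature term drops out and only $\frac{1}{20}\left<\nabla^2_{w}\nabla_{u}\nabla V_0,u\right>$ survives, evaluated on the quartic part of $V$. Your constant is correct: writing $S=A+A^{T}$ so that $V=-\frac{1}{4}\left<Sx,x\right>^2+O(|x|^5)$, one gets $\left<\nabla^2_{w}\nabla_{u}\nabla V_0,u\right>=-4\left<Su,w\right>^2-2\left<Su,u\right>\left<Sw,w\right>$, which after the $S\mapsto A$ translation yields $\MTW(u,0,w)=-\frac{1}{5}\bigl[(\left<Au,w\right>+\left<Aw,u\right>)^2+2\left<Au,u\right>\left<Aw,w\right>\bigr]<0$ for the hypothesized orthogonal pair. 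The one genuine soft spot is the one you flag yourself: Theorem \ref{main1-1} assumes $x$ is a maximum of $V$, and when $\left<Ax,x\right>$ has nontrivial zeros the $O(|x|^5)$ tail can destroy the local maximum at $0$. Your proposed fix --- replacing $V$ by a function with the same $4$-jet that does have a maximum at $0$ --- is not quite sufficient as stated, because changing $V$ changes the cost $c$, and the corollary is an assertion about the original cost; to transfer the conclusion one still needs to know that the original $c$ is smooth at $(0,0)$ and that the derivation of Theorem \ref{main1} applies to it. That said, this caveat is inherited from the paper itself, whose statement of the corollary silently omits the maximum hypothesis of Theorem \ref{main1-1}, so your proposal is faithful to (and no less rigorous than) the intended argument.
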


In the second part of the paper, we focus on the case $c=d^2$, where $d$ is a Riemannian distance on the manifold $M$. We go beyond Theorem \ref{Loeper} and consider higher order necessary conditions for the MTW conditions. More precisely, according to Theorem \ref{Loeper}, the Riemannian manifold $M$ necessarily has non-negative sectional curvature if the cost $c=d^2$ satisfies the MTW conditions. However, when the sectional curvature is only non-negative, Theorem \ref{Loeper} does not tell us anything about the MTW conditions near where the sectional curvature vanishes. To understand the MTW conditions near these points, we consider the higher order Taylor expansion of the MTW curvature in the $v$-variable. If we assume that the sectional curvature $K(u,w)$ of the plane spanned by $u$ and $w$ vanishes, then the zeroth order term in the Taylor expansion of the MTW curvature $\MTW(u,v,w)$ in $v$ at the origin vanishes by Theorem \ref{Loeper}. Therefore, if the MTW curvature satisfies $\MTW(u,v,w)\geq 0$ for all small enough $v$, then necessarily the first order term in the Taylor expansion vanishes and the second order term is non-negative. As a result, we get new necessary conditions for the cost $d^2$ to satisfy the weak MTW condition (Theorem \ref{main2}). When the manifold is two-dimensional, the conditions are simplified and give the following simple result. 

\begin{thm}\label{main2cor1}
Assume that $M$ is a two dimensional Riemannian manifold with Riemannian distance function $d$. If the cost $c=d^2$ satisfies the weak MTW condition \Athreew, then $M$ has non-negative Gauss curvature and the Riemannian curvature $R$  satisfies 
\[
3\left<(\nabla_u\nabla_w R)(w,u)w,u\right>^2\leq
2\left<(\nabla_w^2R)(w,u)w,u\right>\left<(\nabla_u^2R)(w,u)w,u\right>
\]
for each pair $(u,w)$ of orthogonal vectors $\left<u,w\right>=0$ which spanned a plane with zero sectional curvature (i.e. $\left<R(u,w)u,w\right>=0$).
\end{thm}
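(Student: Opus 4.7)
The plan is to combine Theorem \ref{Loeper} with a second-order Taylor expansion of the MTW tensor in its middle variable. Non-negativity of the Gauss curvature is immediate from Theorem \ref{Loeper}: in dimension two the Gauss curvature coincides with the sectional curvature of any pair of linearly independent tangent vectors, so the weak MTW condition \Athreew forces it to be non-negative.

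For the second assertion, fix a point $x$ with vanishing Gauss curvature and an orthogonal pair $(u,w)\subset T_xM$. Consider the Taylor expansion
\[
\MTW(u,v,w) = \MTW(u,0,w) + D_v\MTW(u,0,w)[v] + \tfrac{1}{2} D_v^2\MTW(u,0,w)[v,v] + O(|v|^3)
\]
in $v$ about the origin. By Theorem \ref{Loeper} the constant term equals $K(u,w)(|u|^2|w|^2-\left<u,w\right>^2)$, which vanishes under our hypothesis. The weak MTW condition requires $\MTW(u,v,w)\geq 0$ for all sufficiently small $v$. A non-zero linear term would produce negative values on one side of the origin, so it must vanish identically in $v$, and consequently $D_v^2\MTW(u,0,w)[v,v]\geq 0$ as a quadratic form on $T_xM$.

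Since $\dim M=2$ and $u,w$ form an orthogonal basis of $T_xM$, any tangent vector can be written uniquely as $v=su+tw$. The quadratic form $D_v^2\MTW(u,0,w)[v,v]$ then becomes a quadratic form $As^2+2Bst+Ct^2$ in $(s,t)$, whose non-negativity is equivalent to $A,C\geq 0$ together with the discriminant inequality $B^2\leq AC$. The technical content, which is the output of Theorem \ref{main2}, is the identification of these coefficients with explicit expressions in the covariant derivatives of the Riemann tensor: one finds that $A$, $B$, $C$ are proportional to $\left<(\nabla_u^2R)(w,u)w,u\right>$, $\left<(\nabla_u\nabla_wR)(w,u)w,u\right>$, $\left<(\nabla_w^2R)(w,u)w,u\right>$ respectively, with proportionality constants that combine to yield the factor $3$ against $2$ in the stated inequality.

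The main obstacle is the computation underlying Theorem \ref{main2}: expanding the cost $c=d^2$ to sufficiently high order along geodesics, differentiating the resulting MTW tensor twice in $v$ at $v=0$, and collecting the result into covariant-derivative-of-curvature quantities while using $K(u,w)=0$ to cancel lower order contributions. Once Theorem \ref{main2} is in hand, the specialization to dimension two is simply a matter of writing out a $2\times 2$ non-negative quadratic form and invoking the standard discriminant inequality.
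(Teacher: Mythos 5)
Your proposal is correct and follows essentially the same route as the paper: Theorem \ref{Loeper} gives non-negative Gauss curvature, the vanishing of the zeroth- and first-order Taylor terms of $\MTW(u,\cdot,w)$ forces the second-order term to be a non-negative quadratic form in $v=su+tw$, and the discriminant inequality yields the stated bound, with the coefficients supplied by Theorem \ref{main2}. The only details you elide (and which the paper spells out via Theorem \ref{main2-2}) are that at a point of vanishing curvature in dimension two the terms of Theorem \ref{main2} quadratic in $R$ drop out, and that $\left<(\nabla_u\nabla_wR)(w,u)w,u\right>=\left<(\nabla_w\nabla_uR)(w,u)w,u\right>$ (checked in normal coordinates), which is what lets the two cross terms combine into the single coefficient $B$.
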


As an example, we consider the two dimensional Euclidean space $\Real^2$ equipped with the metric 
\begin{equation}\label{conformal}
\left<u,v\right>=e^{2f(x)}u\cdot v,
\end{equation}
where $u\cdot w$ denotes the usual dot product and $f(x,y)=x^3y+ax^2y^2+xy^3+a_4y^4$. 

For these Riemannian metrics, the Gauss curvature is zero at the origin and nonzero everywhere else if 
$a\leq -3$. As a result of Theorem \ref{main2cor1}, we get the following. 

\begin{thm}\label{main3}
Assume that the Riemannian distance $d$ is defined by the Riemannian metric $\left<\cdot,\cdot\right>$ given in (\ref{conformal}). If the cost $c=d^2$ satisfies the weak MTW condition \Athreew, then 
\[
a\leq-\sqrt{\frac{27}{2}}. 
\]
\end{thm}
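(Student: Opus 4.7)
The plan is to apply Theorem~\ref{main2cor1} at the origin of $\mathbb{R}^2$, which will be the zero of the Gauss curvature for the given metric. For a two-dimensional conformal metric $\langle u,v\rangle = e^{2f}u\cdot v$ the Gauss curvature is $K=-e^{-2f}\Delta f$, with $\Delta$ the Euclidean Laplacian. A direct computation gives $\Delta f = 2ax^2+12xy+(2a+12a_4)y^2$, so $K(0)=0$ and Theorem~\ref{main2cor1} applies at the origin; its first conclusion, non-negativity of $K$ globally, forces the quadratic form $-\Delta f$ to be positive semidefinite, yielding $a\le 0$, $a+6a_4\le 0$, and $a(a+6a_4)\ge 9$.

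The decisive simplification is that $f = O(|x|^4)$ near the origin, which makes $g(0)=\mathrm{Id}$, $\Gamma(0)=0$, and $(\partial\Gamma)(0)=0$, and consequently $R(0)=0$ and $(\nabla R)(0)=0$. Extending a chosen orthonormal pair $(u,w)$ at the origin as constant-coefficient vector fields in coordinates, each of the three quantities appearing in the inequality of Theorem~\ref{main2cor1} reduces at the origin to an ordinary second partial derivative of the scalar $\phi(x)=\langle R_x(W,U)W,U\rangle = K(x)e^{4f(x)} = -\Delta f(x)+O(|x|^6)$. The relevant Hessian entries at the origin are therefore $\phi_{xx}(0)=-4a$, $\phi_{yy}(0)=-4(a+6a_4)$, and $\phi_{xy}(0)=-12$.

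Applying the inequality of Theorem~\ref{main2cor1} to the specific orthonormal pair $(u,w)=(e_1,e_2)$ yields
\[
3\cdot(-12)^2 \le 2\cdot(-4a)\cdot(-4(a+6a_4)),
\]
equivalently $a(a+6a_4)\ge 27/2$. I would then run the same inequality through the full rotated family of orthonormal pairs $(u_\theta,w_\theta)$ and combine the resulting $(a,a_4)$-coupled family with the sign constraints from the first step; pinning down the saturating configuration then yields $a^2\ge 27/2$, and hence the claimed bound $a\le-\sqrt{27/2}$.

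The principal obstacle is extracting a bound depending on $a$ alone out of the joint family of inequalities parametrized by both the rotation angle and the auxiliary parameter $a_4$: the $(e_1,e_2)$-inequality by itself only couples $a$ with $a+6a_4$, and producing the pure lower bound $|a|\ge\sqrt{27/2}$ requires exploiting the specific off-diagonal value $\phi_{xy}(0)=-12$ at every rotation and identifying the extremal choice of $a_4$ for each $a$.
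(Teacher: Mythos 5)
Your reduction is the same as the paper's: work at the origin, use $f=O(|x|^4)$ to kill the Christoffel symbols to second order so that each covariant second derivative of $R$ collapses to an ordinary second partial of $\phi=\langle R(W,U)W,U\rangle=-\Delta f+O(|x|^6)$, and feed the Hessian of $-\Delta f$ into Theorem \ref{main2cor1}. The Hessian entries you record are correct. The problem is that the proof stops exactly where the theorem has to be extracted: the step ``run the inequality through the rotated family and identify the extremal $a_4$'' is only announced, and you yourself flag it as the principal obstacle. That obstacle is real and, as you have set things up, insurmountable: writing $b=a+6a_4$ and $H=\bigl(\begin{smallmatrix}-4a&-12\\-12&-4b\end{smallmatrix}\bigr)$, the inequality $3(u^{T}Hw)^2\le 2(u^{T}Hu)(w^{T}Hw)$ over all rotated orthonormal pairs is equivalent (after the standard $2\theta$ substitution) to the single condition $3\rho^2\le 2T^2$ with $T=\tfrac{1}{2}\mathrm{tr}\,H=-2(a+b)$ and $\rho^2=4(a-b)^2+144$, i.e.
\[
a^2+b^2-10ab+108\le 0 .
\]
Viewed as a quadratic in $b$ this has solutions whenever $96a^2\ge 432$, i.e. $a^2\ge 9/2$; for instance $a=-3/\sqrt{2}$, $b=-15/\sqrt{2}$ satisfies it with equality and also satisfies $a,b\le 0$, $ab\ge 9$, so the Gauss curvature is nonnegative and vanishes only at the origin. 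Thus for suitable $a_4$ the full family of origin conditions from Theorem \ref{main2cor1} is consistent with $a^2<27/2$, and no choice of rotation or ``extremal $a_4$'' will recover the claimed bound.

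The resolution is that the paper's computation (consistently with Proposition \ref{GaussPos}) takes $f=x^3y+ax^2y^2+xy^3$, i.e. drops the $a_4y^4$ term, so that $b=a$. In that case no rotation family is needed: your own $(e_1,e_2)$ inequality $a(a+6a_4)\ge 27/2$ already reads $a^2\ge 27/2$, which together with $a<0$ gives $a\le-\sqrt{27/2}$. (Equivalently, the paper computes $3\langle(\nabla_w\nabla_uR)(w,u)w,u\rangle^2-2\langle(\nabla_w^2R)(w,u)w,u\rangle\langle(\nabla_u^2R)(w,u)w,u\rangle=16[(27-2a^2)(u_1^4+u_2^4)+(18-4a^2)u_1^2u_2^2]$ for $w=-u_2\partial_x+u_1\partial_y$ and sets $u_1=0$.) So either restrict to $a_4=0$ and finish in one line from your step already on the page, or acknowledge that with $a_4$ genuinely free the stated conclusion does not follow from the origin conditions alone.
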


\

\section{Background:  The MTW Curvature}\label{background}

In this section, we will review some basic facts about the optimal transportation problem and the definition of the Ma-Trudinger-Wang (MTW) curvature. The assumptions in the theorems stated in this section are simplified to avoid heavy notation. The corresponding theorems with relaxed assumptions can be found, for instance, in \cite{Vi}.

Let $\left<\cdot,\cdot\right>$ be a Riemannian metric on a manifold $M$ and let $V:M\to\Real$ be a smooth function which is bounded above. Let $L:TM\to\Real$ be the Lagrangian defined by 
\[
L(x,v)=\frac{1}{2}|v|^2-V(x). 
\]

In this paper, we are mainly interested in the cost $c$ defined by 
\begin{equation}\label{costagain}
c(x,y)=\inf\int_0^1L(\gamma(t),\dot\gamma(t))dt,
\end{equation}
where the infimum is taken over all smooth curves $\gamma(\cdot)$ satisfying $\gamma(0)=x$ and $\gamma(1)=y$. 

Curves $t\mapsto\gamma(t)$ which achieve the above infimum are called curves of least action and they satisfy the following equation (see \cite{LeMc})
\begin{equation}\label{least}
\partial_t^2\gamma=-\nabla V_\gamma. 
\end{equation}
Here we abuse notation and denote the covariant derivative by $\partial_t$. The same convention will be used throughout this paper. 

If $t\mapsto\gamma(t)$ is a curve of least action with initial velocity $v$, then the $c$-exponential map $\exp^c$ is defined by 
\[
\exp^c(v)=\gamma(1). 
\]
Note that, unlike the Riemannian case, $t\mapsto\exp^c(tv)$ is not a curve of least action in general.

Let $\mu$ and $\nu$ be two Borel probability measures with compact supports on the manifold $M$. We recall that the optimal transportation problem is the following minimization problem: 

Find a Borel map which minimizes the following total cost among all Borel maps $\varphi:M\longrightarrow M$ which push $\mu$ forward to $\nu$:
\[
\int_Mc(x,\varphi(x))d\mu(x)
\]
Here the push forward $\varphi_{*}\mu$ of a measure $\mu$ by a Borel map $\varphi$ is the measure defined by $\varphi_{*}\mu(U)=\mu(\varphi^{-1}(U))$ for all Borel sets $U\subseteq M$. 

\begin{thm}\label{MongeExist}
Suppose that the cost $c$ is given by (\ref{costagain}) and the measure $\mu$ is absolutely continuous with respect to the Lebesgue measure. Then there is a solution $\varphi$ (called the optimal map) to the above optimal transportation problem which is unique $\mu$-almost everywhere. Moreover, there exists a Lipschitz function $f:M\to\Real$ such that the unique optimal map $\varphi$ is given by
\[
\varphi(x)=\exp^c(\nabla f(x)). 
\]
\end{thm}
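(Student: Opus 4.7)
The plan is to follow the classical Brenier--McCann strategy, generalized to mechanical Lagrangian costs as in \cite{BeBu,FaFi,AgLe,FiRi}. First I would pass from the Monge problem to its Kantorovich relaxation: minimize $\int c(x,y)\,d\pi(x,y)$ over couplings $\pi$ with marginals $\mu$ and $\nu$. Since $c$ defined by (\ref{costagain}) is continuous and bounded below on the compact supports of $\mu,\nu$, Kantorovich duality produces $c$-conjugate potentials $f$ and $f^c$ with $f(x)+f^c(y)\le c(x,y)$, equality holding $\pi$-a.e.\ for any optimal $\pi$.

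Next I would establish two regularity features of the mechanical cost. First, local semiconcavity of $(x,y)\mapsto c(x,y)$: one proves it by comparing the action along a minimizing curve with that along a smooth perturbation joining nearby endpoints; this inheritance of semiconcavity from the Lagrangian is standard for natural mechanical systems bounded above. Consequently $f$, being a $c$-transform, is locally semiconvex, hence locally Lipschitz. Second, the initial-momentum identity: if $\gamma$ is a curve of least action from $x$ to $y$, then differentiating the action in the endpoint gives $\nabla_x c(x,y)=-\dot\gamma(0)$. This is the non-degenerate twist property for the mechanical cost away from the $c$-cut locus, since it makes the initial velocity of the minimizing trajectory depend injectively on the endpoint in that regime.

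Step three exploits that $\mu$ is absolutely continuous. By Rademacher applied to the Lipschitz $f$, the gradient $\nabla f$ exists $\mu$-a.e. At any differentiability point $x_0$ with associated $y_0$ realizing $f(x_0)+f^c(y_0)=c(x_0,y_0)$, the variational inequality $f(x)+f^c(y_0)\le c(x,y_0)$ forces $\nabla f(x_0)=\nabla_x c(x_0,y_0)$. Combining with the initial-momentum identity and the definition of $\exp^c$ (initial velocity to endpoint) yields $y_0=\exp^c(\nabla f(x_0))$. Hence every optimal coupling is concentrated on the graph of $\varphi(x)=\exp^c(\nabla f(x))$, giving existence, $\mu$-a.e.\ uniqueness, and the explicit formula.

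The main obstacle is the $c$-cut/focal set, where the initial velocity of the least-action curve need not depend injectively or even continuously on the endpoint, so the twist identity degenerates. This is handled by showing, via the semiconcavity of $c$ and an analysis of the $c$-superdifferential on the ``good'' set, that the projection of this bad set has $\mu$-measure zero; the absolute continuity hypothesis is used precisely here. The technical packaging for general Tonelli Lagrangians on manifolds is provided in \cite{FaFi,FiRi}, with \cite{Mc} as the Riemannian prototype and \cite{BeBu,AgLe} covering intermediate settings.
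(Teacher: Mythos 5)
The paper offers no proof of this theorem: it is stated as background, with the remark that the full statements and proofs are in the literature (\cite{Vi}, and the existence references \cite{Br,Mc,BeBu,FaFi,AgLe,FiRi}). Your outline --- Kantorovich duality, local semiconcavity of the mechanical cost, Lipschitz/semiconvex potentials plus Rademacher, the first-variation identity $\nabla_x c(x,y)=-\dot\gamma(0)$, and the $\mu$-negligibility of the bad set via absolute continuity --- is exactly the standard argument of those references, so it is essentially the same approach the paper relies on. One small point: with your duality convention the identity $\nabla f(x_0)=\nabla_x c(x_0,y_0)=-\dot\gamma(0)$ gives $y_0=\exp^c(-\nabla f(x_0))$, so to land on the paper's formula $\varphi(x)=\exp^c(\nabla f(x))$ you should replace $f$ by $-f$ (or use the convention $f(x)=\sup_y(\,-c(x,y)+g(y))$); this is cosmetic, not a gap.
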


Next, we discuss the main object of this paper, the Ma-Trudinger-Wang (MTW) curvature. Let $u$, $v$, and $w$ be vectors based at the point $x$. The MTW curvature $\MTW$ is defined by 
\[
\MTW(u,v,w)=-\frac{3}{2}\partial_t^2\partial_s^2c(\sigma(t),\exp^c(v+sw))\Big|_{s=t=0},
\]
where $\sigma$ is any curve with initial velocity $u$ (i.e. $\partial_t\sigma\Big|_{t=0}=u$).

Finally, we can state the MTW conditions. Let $\mathcal O$ be the set of all pairs of points $(x,y)$ contained in the product $M\times M$ such that 
\begin{enumerate}
\item there exists a unique curve of least action $\gamma$ satisfying $\gamma(0)=x$ and $\gamma(1)=y$, 
\item the map $d\exp^c\Big|_{T_xM}$ is a submersion at $\dot\gamma(0)$. 
\end{enumerate}
It is known that the cost function $c$ is smooth on the set $\mathcal O$ (see, for instance, \cite{LeMc}) and the MTW curvature is well-defined. Let $\tilde {\mathcal O}$ be the subset of all initial velocities $\partial_t\gamma\Big|_{t=0}$ in the definition of $\mathcal O$. Then the weak MTW condition is given by the following:

The cost $c$ satisfies the weak MTW condition \Athreew on a subset $\mathcal M$ of $\mathcal O$ if $$\MTW(u,v,w)\geq 0$$ on the set $$\{(u,v,w)|v\in\tilde{\mathcal O},(x,\exp^c(v))\in\mathcal M,\left<u,w\right>=0\},$$

The relevance of these conditions to the regularity theory of optimal maps can be found
in \cite{MaTrWa,Lo1,Lo2,KiMc1,TrWa,LoVi,FiLo,FiRi,FiRiVi1,FiRiVi2,FiKiMc2}. 

\

\section{The Ma-Trudinger-Wang curvature and the Riemannian curvature}

In this section, we give a formula for the MTW curvature in terms of the change in the Riemannian curvature and the Hessian of the potential along curves of least action. Before stating the precise result, let us introduce the following notations. Let $u$, $v$, and $w$ be tangent vectors based at the point $x$ and let $\tau\mapsto\gamma_s(\tau)$ be the curve of least action with initial velocity $v+sw$ (i.e. $\partial_\tau\gamma_s\Big|_{\tau=0}=v+sw$). Let $\tau\mapsto U_s(\tau)$ be the parallel translation of the vector $u$ along the curve $\tau\mapsto\gamma_s(\tau)$. Let $\tau\mapsto J_s(\tau)$ be a vector field defined along the curve $\tau\mapsto\gamma_s(\tau)$, called Jacobi field. It is defined as the solution of the following Jacobi equation 
\[
\partial_\tau^2J+R(\partial_\tau\gamma,J)\partial_\tau\gamma+\Hess V_\gamma(J)=0. 
\]
We assume that the Jacobi field $J(\cdot)$ also satisfies the following boundary conditions $J_s(0)=u$, $J_s(1)=0$, and $J_s(\tau)\neq 0$ for all time $\tau$ in the interval $(0,1)$. 

\begin{thm}\label{curvature-curvature}
The MTW curvature is given by 
\begin{equation}\label{curvature-0}
\begin{split}
&\MTW(u,v,w)\\
&=\frac{3}{2}\int_0^1\int_0^{\bar\tau}\partial_s^2\left<R(\partial_\tau\gamma,J)\partial_\tau\gamma+\Hess V_\gamma(J),U\right>d\tau d\bar\tau\Big|_{s=0}.  
\end{split}
\end{equation}
\end{thm}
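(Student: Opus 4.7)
The plan is to compute $\partial_t^2\partial_s^2 F(t,s)|_{t=s=0}$, where $F(t,s):=c(\sigma(t),\exp^c(v+sw))$, by identifying $\partial_t^2F$ with a boundary value of the Jacobi field $J_s$ at $\tau=0$ and then converting that boundary value into the iterated integral in (\ref{curvature-0}) via the Jacobi equation. For $(t,s)$ near the origin, let $\tau\mapsto\gamma_{t,s}(\tau)$ be the curve of least action from $\sigma(t)$ to $\exp^c(v+sw)$, so that $F(t,s)=\int_0^1 L(\gamma_{t,s},\partial_\tau\gamma_{t,s})\,d\tau$ and $\gamma_{0,s}=\gamma_s$. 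Set $Y(\tau,s):=\partial_t\gamma_{t,s}(\tau)|_{t=0}$. Because each $\gamma_{t,s}$ satisfies (\ref{least}), linearizing in $t$ shows that $Y(\cdot,s)$ solves the Jacobi equation along $\gamma_s$; its boundary values $Y(0,s)=u$ and $Y(1,s)=0$ coincide with those of $J_s$, so $Y(\cdot,s)=J_s(\cdot)$.

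Applying the first-variation formula to the family of minimizers $\gamma_{t,s}$, and noting that the right endpoint $\gamma_{t,s}(1)=\exp^c(v+sw)$ is frozen in $t$, gives
\begin{equation*}
\partial_tF(t,s)=-\left<\partial_\tau\gamma_{t,s}(0),\dot\sigma(t)\right>.
\end{equation*}
Differentiating once more in $t$ and commuting covariant derivatives ($\partial_t\partial_\tau\gamma_{t,s}|_{t=0}=\partial_\tau Y=\partial_\tau J_s$) yields
\begin{equation*}
\partial_t^2F(0,s)=-\left<\partial_\tau J_s(0),u\right>-\left<v+sw,\partial_t^2\sigma(0)\right>.
\end{equation*}
The second summand is linear in $s$ and so is annihilated by $\partial_s^2$, giving
\begin{equation*}
\MTW(u,v,w)=-\tfrac{3}{2}\partial_t^2\partial_s^2F\big|_{t=s=0}=\tfrac{3}{2}\partial_s^2\left<\partial_\tau J_s(0),u\right>\big|_{s=0}.
\end{equation*}
This is already a pleasant consistency check, since it shows the final expression depends only on $u=\dot\sigma(0)$ and not on the higher-order behavior of $\sigma$.

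It remains to rewrite $\left<\partial_\tau J_s(0),u\right>$ as the double integral in (\ref{curvature-0}), modulo an $s$-independent constant. Since $\partial_\tau U_s=0$, the Jacobi equation for $J_s$ gives
\begin{equation*}
\partial_\tau\left<\partial_\tau J_s,U_s\right>=-\left<R(\partial_\tau\gamma_s,J_s)\partial_\tau\gamma_s+\Hess V_{\gamma_s}(J_s),U_s\right>.
\end{equation*}
Integrating from $0$ to $\bar\tau$ and then over $\bar\tau\in[0,1]$, and using the identity $\int_0^1\left<\partial_\tau J_s,U_s\right>d\bar\tau=\left<J_s,U_s\right>\big|_0^1=-|u|^2$, produces
\begin{equation*}
\int_0^1\!\!\int_0^{\bar\tau}\left<R(\partial_\tau\gamma_s,J_s)\partial_\tau\gamma_s+\Hess V_{\gamma_s}(J_s),U_s\right>d\tau\,d\bar\tau=\left<\partial_\tau J_s(0),u\right>+|u|^2.
\end{equation*}
Since $|u|^2$ is $s$-independent, applying $\tfrac{3}{2}\partial_s^2|_{s=0}$ to both sides delivers (\ref{curvature-0}).

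The only substantive step is the identification $Y(\cdot,s)=J_s$, which depends on smooth $(t,s)$-dependence of the family $\gamma_{t,s}$; this is ensured by working on the regular set $\mathcal O$ described in Section~\ref{background}. Everything else is bookkeeping with the first and second variations and a single integration by parts, so I expect no real obstacle beyond carefully tracking the boundary terms and confirming that the auxiliary $\partial_t^2\sigma(0)$ contribution is indeed linear in $s$.
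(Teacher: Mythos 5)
Your proposal is correct, and its second half --- pairing the Jacobi equation with the parallel field $U$, integrating once in $\tau$ from $0$ to $\bar\tau$, once more in $\bar\tau$ over $[0,1]$, and using the boundary values $J_s(0)=u$, $J_s(1)=0$ to produce $\left<\partial_\tau J_s(0),u\right>=-|u|^2+\int_0^1\int_0^{\bar\tau}\left<R(\partial_\tau\gamma,J)\partial_\tau\gamma+\Hess V_\gamma(J),U\right>d\tau\,d\bar\tau$ --- is word for word the computation in the paper. The one genuine difference is how you arrive at the starting identity $\MTW(u,v,w)=\frac{3}{2}\partial_s^2\left<\partial_\tau J_s(0),u\right>\big|_{s=0}$: the paper simply cites \cite[Theorem 3.1]{LeMc} for this, whereas you rederive it from the definition of $\MTW$ via the first-variation formula $\partial_tF=-\left<\partial_\tau\gamma_{t,s}(0),\dot\sigma(t)\right>$, the identification of the variation field $\partial_t\gamma_{t,s}|_{t=0}$ with $J_s$ (same linearized equation, same boundary data, uniqueness on $\mathcal O$), and the observation that the $\left<v+sw,\partial_t^2\sigma(0)\right>$ term is affine in $s$ and hence killed by $\partial_s^2$. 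This buys a self-contained proof and, as you note, makes visible why $\MTW$ depends only on $u=\dot\sigma(0)$; the cost is that you must justify smooth $(t,s)$-dependence of the minimizers, which is exactly what the restriction to $\mathcal O$ provides. Your derivation checks out (signs, the commutation $\partial_t\partial_\tau\gamma=\partial_\tau\partial_t\gamma$, and the constant $-|u|^2$ all agree with the paper's equation (\ref{curvature-1})), so there is no gap.
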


\begin{proof}
The Jacobi field $J$ satisfies the following Jacobi equation 
\[
\partial_\tau^2J+R(\partial_\tau\gamma,J)\partial_\tau\gamma+\Hess V_\gamma(J)=0. 
\]

It follows that 
\[
\partial_\tau\left<\partial_\tau J,U\right>+\left<R(\partial_\tau\gamma,J)\partial_\tau\gamma,U\right>+\left<\Hess V_\gamma(J),U\right>=0. 
\]

If we integrate with respect to the variable $\tau$, then the above equation becomes 
\[
\begin{split}
\left<U,\partial_\tau J\right>\Big|_{\tau=0}
&=\left<U,\partial_\tau J\right>\Big|_{\tau=\bar\tau} +\int_0^{\bar\tau}\left<R(\partial_\tau\gamma,J)\partial_\tau\gamma+\Hess V_\gamma(J),U\right>d\tau\\ 
&=\partial_\tau\left<U, J\right>\Big|_{\tau=\bar\tau} +\int_0^{\bar\tau}\left<R(\partial_\tau\gamma,J)\partial_\tau\gamma+\Hess V_\gamma(J),U\right>d\tau. 
\end{split}
\]

Now if we integrate again with respect to $\bar\tau$ and use the boundary conditions for $J$, then we have 
\begin{equation}\label{curvature-1}
\begin{split}
&\left<U,\partial_\tau J\right>\Big|_{\tau=0}\\ 
&=-\left<u,u\right> +\int_0^1\int_0^{\bar\tau}\left<R(\partial_\tau\gamma,J)\partial_\tau\gamma+\Hess V_\gamma(J),U\right>d\tau d\bar\tau. 
\end{split}
\end{equation}

By \cite[Theorem 3.1]{LeMc}, we know that the MTW curvature is given by 
\[
\MTW(u,v,w)=\frac{3}{2}\partial_s^2\left<U,\partial_\tau J\right>\Big|_{\tau=s=0}.  
\]

The result follows from this and (\ref{curvature-1}). 
\end{proof}

\

\section{Zeroth Order Condition for Natural Mechanical Actions}

In this section, we give the proof of the following main theorem. 

\begin{thm}\label{main1}
Let $x$ be a maximum point of the potential $V$. Let $u$ and $w$ be two tangent vectors based at $x$. Then the MTW curvature for the cost $c$ defined in (\ref{costagain}) satisfies 
\[
\begin{split}
&\MTW(u,0,w) 
=\frac{3}{2}\Bigg(\int_0^1\int_0^{\bar\tau}2\left<R(\partial_\tau\bar w,\tilde u)\partial_\tau\bar w,u\right>\\
&+\left<\Hess V_x(\tilde u),\int_0^\tau R(\partial_\tau\bar w,\bar w)ud\tau\right>
+\left<\nabla^2_{\bar w}\nabla_{\tilde u}\nabla V_x,u\right>d\tau d\bar\tau\Bigg)\Bigg|_{s=0},
\end{split}
\]
where $\tilde u$ and $\bar w$ satisfies the following linear ordinary differential equation
\[
\partial_\tau^2 u=-\Hess V_x(u), 
\]
$\bar w$ satisfies the initial conditions 
\[
\bar w\Big|_{\tau=0}=0\quad \partial_\tau\bar w\Big|_{\tau=0}=w,
\] 
and $\tilde u$ satisfies the boundary conditions
\[
\tilde u\Big|_{\tau=0}=u,\quad \tilde u\Big|_{\tau=1}=0, \quad \tilde u\neq 0 \quad \text{if } 0<\tau<1. 
\]
\end{thm}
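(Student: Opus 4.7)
The plan is to apply the integral representation of Theorem~\ref{curvature-curvature} with $v=0$ and expand the integrand to second order in $s$ at $s=0$. The starting observation is that since $x$ is a critical point of $V$, i.e.\ $\nabla V_x = 0$, the curve of least action with zero initial velocity starting at $x$ is the constant curve $\gamma_0(\tau) \equiv x$. Consequently $\partial_\tau \gamma_0 \equiv 0$; the Jacobi equation at $s=0$ collapses to $\partial_\tau^2 J_0 + \Hess V_x(J_0) = 0$ with $J_0(0)=u$, $J_0(1)=0$, which identifies $J_0$ with $\tilde u$; and the parallel field $U_0$ along $\gamma_0$ is simply the constant vector $u \in T_xM$.

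Next I set up the $s$-variations. Writing $\bar w = \partial_s\gamma_s|_{s=0}$ and $\eta = \nabla_{\partial_s}^2\gamma_s|_{s=0}$, linearization of $\partial_\tau^2\gamma_s = -\nabla V_{\gamma_s}$ at $s=0$ reproduces exactly the equation for $\bar w$ stated in the theorem. Let $J^{(k)}=\nabla_{\partial_s}^k J_s|_{s=0}$ and $U^{(k)}=\nabla_{\partial_s}^k U_s|_{s=0}$. The parallel transport equation $\nabla_{\partial_\tau}U_s = 0$, combined with the Ricci identity and $\partial_\tau\gamma_0\equiv 0$, forces $U^{(1)}\equiv 0$ and $\nabla_{\partial_\tau}U^{(2)} = R(\partial_\tau\bar w, \bar w)u$, so that $U^{(2)}(\tau) = \int_0^\tau R(\partial_\sigma\bar w,\bar w)u\,d\sigma$, which is precisely the inner integral appearing in the statement.

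The main step is to compute $\partial_s^2 F_s|_{s=0}$ where $F_s$ is the integrand of Theorem~\ref{curvature-curvature}. The curvature term is quadratic in $\partial_\tau\gamma_s$ and vanishes at $s=0$, so only the diagonal contribution $2\left<R(\partial_\tau\bar w, \tilde u)\partial_\tau\bar w, u\right>$ survives. The Hessian term produces a $\left<\Hess V_x(\tilde u), U^{(2)}\right>$ piece together with error terms involving $\eta$, $J^{(1)}$, $J^{(2)}$ and derivatives of $\Hess V$ contracted against them. These are eliminated by substituting the equation satisfied by $J^{(2)}$---obtained by differentiating the Jacobi equation twice in $s$ at $s=0$ and carefully including the curvature commutator terms coming from $[\nabla_{\partial_s}^2,\nabla_{\partial_\tau}^2]$---and then integrating once by parts in $\tau$ using the Dirichlet boundary conditions $J^{(2)}(0)=J^{(2)}(1)=0$. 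After these manipulations the $\eta$, $J^{(1)}$, $(\nabla_\eta\Hess V)$ and $(\nabla_{\bar w}\Hess V)$ terms cancel, and one is left with the stated three-term integrand. The main obstacle will be keeping track of the commutator corrections to the $J^{(2)}$ equation: omitting them breaks the cancellations that reduce the expansion to just the three advertised terms, so the bookkeeping of $[\nabla_{\partial_s},\nabla_{\partial_\tau}]$ acting on vector fields along the variation is the critical technical point.
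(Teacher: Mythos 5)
Your overall strategy coincides with the paper's: invoke Theorem \ref{curvature-curvature}, use that the base curve of least action with $v=0$ is the constant curve at the critical point $x$ (so $\partial_\tau\gamma\big|_{s=0}\equiv 0$, $J\big|_{s=0}=\tilde u$, $U\big|_{s=0}\equiv u$), and differentiate the integrand twice in $s$. Your identifications $\partial_s\gamma\big|_{s=0}=\bar w$, $U^{(1)}\equiv 0$, $U^{(2)}(\tau)=\int_0^\tau R(\partial_\sigma\bar w,\bar w)u\,d\sigma$, and the survival of only the diagonal term $2\left<R(\partial_\tau\bar w,\tilde u)\partial_\tau\bar w,u\right>$ in the curvature part are exactly Lemmas \ref{longlem-1}--\ref{longlem-3} and equations (\ref{long-1})--(\ref{short-1}) of the paper.

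The gap is the last step. You correctly note that $\partial_s^2\bigl(\Hess V_\gamma(J)\bigr)\big|_{s=0}$ contains, besides $(\nabla^2_{\bar w,\bar w}\Hess V)(\tilde u)$, the terms $(\nabla_{\eta}\Hess V)(\tilde u)$, $2(\nabla_{\bar w}\Hess V)(J^{(1)})$ and $\Hess V_x(J^{(2)})$, and you then assert in one sentence that substituting the $J^{(2)}$ equation and integrating by parts makes them all cancel. That assertion is the entire content of the theorem at this point, and it is not established; indeed it is doubtful as stated. Pairing the equation $\partial_\tau^2 J^{(2)}+\Hess V_x(J^{(2)})=-(\text{source})$ against $\tilde u$ versus against $(1-\tau)u$ (both solve the homogeneous equation only when $\Hess V_x=0$) shows that the $\left<\Hess V_x(J^{(2)}),u\right>$ contribution does not vanish under the double integral: it precisely converts the weight in the fourth-derivative term from $(1-\tau)\left<\cdot,u\right>$ to $\left<\cdot,\tilde u\right>$, and these agree only when $\Hess V_x=0$ (the setting of Theorem \ref{main1-1}, where $\tilde u=(1-\tau)u$). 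Likewise $\eta=\partial_s^2\gamma\big|_{s=0}$ and $J^{(1)}$ are driven by $\nabla^3V_x$ and contribute quantities quadratic in the third derivative of $V$ that are not visibly absorbed by the three advertised terms. Note that the paper's own proof never introduces $\eta$, $J^{(1)}$ or $J^{(2)}$ at all: it passes directly from $\left<\partial_s^2(\Hess V_\gamma(J)),U\right>$ to $\left<\nabla^2_{\bar w}\nabla_{\tilde u}\nabla V_x,u\right>$ using only $J\big|_{s=0}=\tilde u$, i.e.\ it treats that identification as essentially notational. So either you adopt that reading of the symbol $\nabla^2_{\bar w}\nabla_{\tilde u}\nabla V_x$ (in which case your extra terms are part of its definition and there is nothing to cancel), or you must actually exhibit the cancellation you claim --- and the Green's-identity computation above indicates that, with the literal tensorial reading, it fails unless $\Hess V_x=0$ and $\nabla^3 V_x=0$.
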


Let us first give the proof of Theorem \ref{main1-1}. 

\begin{proof}[Proof of Theorem \ref{main1-1}]
Since $\Hess_xV=0$, we have $\bar w=\tau w$ and $\tilde u=(1-\tau)u$. If we substitute this back into the formula for $\MTW(u,0,w)$ in Theorem \ref{main1}, then we have 
\[
\begin{split}
&\MTW(u,0,w)\\
&=\frac{3}{2}\int_0^1\int_0^{\bar\tau}2(1-\tau)\left<R(w,u)w,u\right>\\
&+\tau^2(1-\tau)\left<\nabla^2_{w}\nabla_{u}\nabla V_x,u\right>d\tau d\bar\tau\Big|_{s=0}\\
&=\left<R(w,u)w,u\right>+\frac{1}{20}\left<\nabla^2_{w}\nabla_{u}\nabla V_x,u\right>.
\end{split}
\]
\end{proof}

\begin{proof}[Proof of Theorem \ref{main1}] 
First let us note that the cost $c$ is smooth at the point $(x,x)$. Indeed, since $x$ is a maximum point of the potential $V$, the constant curve $\gamma(\cdot)\equiv x$ is the unique minimizer satisfying $\gamma(0)=x$ and $\gamma(1)=x$. Let $\tau\mapsto J(\tau)$ be a vector field defined along $\gamma$ which satisfies the Jacobi equation
\[
\partial_\tau^2J+R(\partial_\tau\gamma,J)\partial_\tau\gamma+\Hess V_\gamma(J)=0. 
\]
Since $\gamma\equiv x$ and $\partial_\tau\gamma=0$, it follows that 
\[
\partial_\tau^2J+\Hess V_x(J)=0. 
\]
The point $x$ is a maximum point of the potential $V$, so the Hessian of the potential $\Hess V$ is non-positive definite. Therefore, if $J$ satisfies the boundary conditions $J\Big|_{\tau=0}=J\Big|_{\tau=1}=0$, then $J\equiv 0$. It follows from \cite[Theorem 7.6]{LeMc} that the map $d\exp^c\Big|_{T_xM}$ has full rank at the origin. Therefore, the cost function $c$ is smooth at the point $(x,x)$ by \cite[Theorem 7.7]{LeMc} (see also Section \ref{background}). 

Let us first introduce some notations. Let $\tau\mapsto\gamma_{s,t}(\tau)$ be the curve of least action with initial velocity $tv+sw$. Let $\tau\mapsto U_{s,t}(\tau)$ be the parallel translation of the vector $u$ along the curve $\tau\mapsto \gamma_{s,t}(\tau)$. Let $\tau\mapsto J_{s,t}(\tau)$ be the Jacobi field defined along the curve $\tau\mapsto\gamma_{s,t}(\tau)$ which satisfies the conditions $J_{s,t}(0)=u$, $J_{s,t}(1)=0$, and $J_{s,t}(\tau)\neq 0$ for all $\tau$ in the interval $(0,1)$. 

If we expand the term $\partial_s^2\left<R(\partial_\tau\gamma,J)\partial_\tau\gamma,U\right>$ in (\ref{curvature-0}), then we have 
\begin{equation}\label{long-1}
\begin{split}
&\partial_s^2\left<R(\partial_\tau\gamma,J)\partial_\tau\gamma,U\right>\\
&=\left<(\partial_s^2R)(\partial_\tau\gamma,J)\partial_\tau\gamma,U\right> +\left<R(\partial_s^2\partial_\tau\gamma,J)\partial_\tau\gamma,U\right>\\
&+\left<R(\partial_\tau\gamma,\partial_s^2 J)\partial_\tau\gamma,U\right> +\left<R(\partial_\tau\gamma,J)\partial_s^2\partial_\tau\gamma,U\right>\\
&+\left<R(\partial_\tau\gamma,J)\partial_\tau\gamma,\partial_s^2U\right> 
+2\left<(\partial_s R)(\partial_s\partial_\tau\gamma,J)\partial_\tau\gamma,U\right>\\
&+2\left<(\partial_sR)(\partial_\tau\gamma,\partial_sJ)\partial_\tau\gamma,U\right> +2\left<(\partial_sR)(\partial_\tau\gamma,J)\partial_s\partial_\tau\gamma,U\right>\\
&+2\left<(\partial_sR)(\partial_\tau\gamma,J)\partial_\tau\gamma,\partial_sU\right> +2\left<R(\partial_s\partial_\tau\gamma,\partial_sJ)\partial_\tau\gamma,U\right>\\
&+2\left<R(\partial_s\partial_\tau\gamma,J)\partial_s\partial_\tau\gamma,U\right> +2\left<R(\partial_s\partial_\tau\gamma,J)\partial_\tau\gamma,\partial_sU\right>\\
&+2\left<R(\partial_\tau\gamma,\partial_sJ)\partial_s\partial_\tau\gamma,U\right> +2\left<R(\partial_\tau\gamma,\partial_sJ)\partial_\tau\gamma,\partial_sU\right>\\ &+2\left<R(\partial_\tau\gamma,J)\partial_s\partial_\tau\gamma,\partial_sU\right>.
\end{split}
\end{equation}

By (1) of Lemma \ref{longlem-1}, (\ref{long-1}) simplifies to 
\[
\partial_s^2\left<R(\partial_\tau\gamma,J)\partial_\tau\gamma,U\right>\Big|_{s=t=0} =2\left<R(\partial_s\partial_\tau\gamma,J)\partial_s\partial_\tau\gamma,U\right>\Big|_{s=t=0}.
\]

If we apply (2) of Lemma \ref{longlem-1}, (1) of Lemma \ref{longlem-2}, and Lemma \ref{longlem-3}, then the above becomes
\begin{equation}\label{short-1}
\partial_s^2\left<R(\partial_\tau\gamma,J)\partial_\tau\gamma,U\right>\Big|_{s=t=0} =2\left<R(\partial_\tau\bar w,\tilde u)\partial_\tau\bar w,u\right>. 
\end{equation}

If we expand the other term $\partial_s^2\left<\Hess V_\gamma(J),U\right>$ in (\ref{curvature-0}), then we have 
\begin{equation}\label{long-2}
\begin{split}
\partial_s^2\left<\Hess V_\gamma(J),U\right>
&=\left<\Hess V_\gamma(J),\partial_s^2U\right>\\
&+\left<\partial_s^2(\Hess V_\gamma(J)),U\right>\\& +2\left<\partial_s(\Hess V_\gamma(J)),\partial_sU\right>. 
\end{split}
\end{equation}

By (1) and (2) of Lemma \ref{longlem-2}, (\ref{long-2}) becomes 
\[
\begin{split}
\partial_s^2\left<\Hess V_\gamma(J),U\right>\Big|_{s=t=0} 
&=\left<\partial_s^2(\Hess V_\gamma(J)),U\right> \\
&+\left<\Hess V_x(\tilde u),\int_0^\tau R(\partial_\tau\bar w,\bar w)ud\tau\right>\Big|_{s=t=0}. 
\end{split}
\]

By (1) of Lemma \ref{longlem-3}, it follows that 
\[
\begin{split}
\partial_s^2\left<\Hess V_\gamma(J),U\right>\Big|_{s=t=0} 
&=\left<\nabla^2_{\bar w}\nabla_{\tilde u}\nabla V_\gamma,u\right>\\
&+\left<\Hess V_x(\tilde u),\int_0^\tau R(\partial_\tau\bar w,\bar w)ud\tau\right>. 
\end{split}
\]

Finally we combine this with (\ref{short-1}) and Theorem \ref{curvature-curvature} to finish the proof. 
\end{proof}

\

\section{Higher Order Conditions in the Riemannian Case}

In this section, we consider the first and the second order terms of the MTW curvature in the $v$-variable. More precisely, we will prove the following second main result of the paper. 

\begin{thm}\label{main2}
Let $d$ be a Riemannian distance function on the manifold $M$. Assume that the cost function $c$ is given by $c=d^2$ Then the MTW curvature satisfies  
\[
\begin{split}
&\partial_t\MTW(u,tv,w)\Big|_{t=0} \\
&=\frac{1}{2}\left<(\nabla_wR)(w,u)v,u\right>+\frac{1}{4}\left<(\nabla_vR)(w,u)w,u\right> 
\end{split}
\]

and 

\[
\begin{split}
&\partial_t^2\MTW(u,tv,w)\Big|_{t=0}\\
&=\frac{1}{10}\left<(\nabla_w^2R)(v,u)v,u\right> 
-\frac{1}{5}\left<R(v,u)u,R(v,w)w\right>\\
&+\frac{4}{15}\left<R(v,u)v,R(w,u)w\right> 
+\frac{2}{5}\left<(\nabla_v\nabla_w R)(w,u)v,u\right> \\
&+\frac{1}{10}\left<(\nabla_v^2R)(w,u)w,u\right> 
-\frac{1}{5}\left<R(w,u)u,R(v,w)v\right>\\
&+\frac{4}{15}\left(\left<R(w,u)v,R(w,u)v\right>+\left<R(v,u)w,R(w,u)v\right>\right)\\
&+\frac{1}{3}\left(\left<R(w,u)v,R(v,w)u\right>+\left<R(v,u)w,R(v,w)u\right>\right).
\end{split}
\]

\end{thm}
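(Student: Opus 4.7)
The plan is to apply Theorem \ref{curvature-curvature} with $V \equiv 0$. In the Riemannian case, the formula becomes
\[
\MTW(u, tv, w) = \frac{3}{2}\int_0^1 \int_0^{\bar\tau} \partial_s^2 \left<R(\partial_\tau \gamma_{s,t}, J_{s,t})\partial_\tau \gamma_{s,t}, U_{s,t}\right>\Big|_{s=0} d\tau\, d\bar\tau,
\]
where $\gamma_{s,t}$ is the geodesic with initial velocity $tv + sw$, $U_{s,t}$ is the parallel translation of $u$ along $\gamma_{s,t}$, and $J_{s,t}$ is the Jacobi field along $\gamma_{s,t}$ with $J(0) = u$ and $J(1) = 0$. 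Differentiating under the double integral, $\partial_t^k\MTW(u, tv, w)|_{t=0}$ for $k \in \{1, 2\}$ is obtained by computing $\partial_s^2\partial_t^k$ of the integrand at $s = t = 0$ and then integrating in $\tau, \bar\tau$.

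First I would Taylor expand the three ingredients $\partial_\tau \gamma_{s,t}$, $J_{s,t}$, $U_{s,t}$ in $(s, t)$ around the origin, up to the orders that survive under $\partial_s^2 \partial_t^k$. At $s = t = 0$ the picture collapses: $\gamma_{0,0} \equiv x$, $\partial_\tau \gamma \equiv 0$, $U \equiv u$, and the Jacobi equation degenerates to $\partial_\tau^2 J = 0$ with $J(0) = u$, $J(1) = 0$, so $J_{0,0}(\tau) = (1 - \tau)u$. Both $\partial_\tau \gamma$ (parallel because the curves are geodesics) and $U$ (parallel by definition) can then be expanded by iterating the commutator identity
\[
\nabla_{\partial_\tau}\nabla_{\partial_\xi} X = \nabla_{\partial_\xi}\nabla_{\partial_\tau} X + R(\partial_\tau \gamma, \partial_\xi \gamma) X \qquad (\xi \in \{s, t\})
\]
and integrating in $\tau$; in particular, at leading order $\partial_\tau \gamma = tv + sw$ and $U = u$ with deviations of second order in $s$ and $t$. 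The Jacobi field $J$ is then solved order-by-order against its boundary values; at the first nontrivial order the elementary computation gives
\[
J_{s,t}(\tau) = (1-\tau)u + \tfrac{1}{6}\,\tau(1-\tau)(2-\tau)\,R(tv+sw,\,u)(tv+sw) + \cdots,
\]
and all further corrections reduce to iterated integrals against polynomials in $\tau$ with coefficients built from $R$, $\nabla R$, $\nabla^2 R$ evaluated at $x$ and tensored with $u, v, w$.

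Substituting these expansions into $\partial_s^2\left<R(\partial_\tau \gamma, J)\partial_\tau \gamma, U\right>|_{s=0}$ and then differentiating in $t$, every surviving contribution organizes itself as either a term with one or two covariant derivatives $\nabla_\xi R$ ($\xi \in \{v, w\}$) paired against $u$'s and $w$'s, or a product of two undifferentiated $R$'s fed back through the quadratic corrections of $J$ and $U$. At first order in $t$ only the singly differentiated pieces survive, and the coefficients $1/2$ and $1/4$ fall out of elementary polynomial integrals $\int_0^1\!\int_0^{\bar\tau}(1-\tau)\tau^a\,d\tau\,d\bar\tau$. At second order in $t$ the same mechanism produces the $\nabla_w^2 R$, $\nabla_v\nabla_w R$, and $\nabla_v^2 R$ pieces, while the quadratic corrections to $J$ and $U$ contribute the four families of $R\cdot R$ terms, the rationals $\tfrac{1}{10}, \tfrac{1}{5}, \tfrac{4}{15}, \tfrac{2}{5}, \tfrac{1}{3}$ coming from analogous polynomial integrals in $\tau$. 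The hard part will be purely combinatorial: the full $\partial_s^2\partial_t^2$-expansion generates a large number of cross terms, many of which must be combined using the symmetries $R(a, b) = -R(b, a)$ and $\left<R(a,b)c, d\right> = \left<R(c,d)a, b\right>$ together with the second Bianchi identity in order to collapse them into the specific pairs displayed in the statement. Paralleling the proof of Theorem \ref{main1}, I would package the bookkeeping into a short sequence of technical lemmas recording $\partial_s^i\partial_t^j$ of each of $\partial_\tau \gamma$, $J$, and $U$ at the origin, after which the two formulas follow by direct substitution and evaluation of the remaining double integrals.
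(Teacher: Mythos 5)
Your strategy is exactly the paper's: start from the integral formula of Theorem \ref{curvature-curvature} with $V\equiv 0$, compute the mixed covariant derivatives $\partial_s^i\partial_t^j$ of $\partial_\tau\gamma$, $J$, and $U$ at $s=t=0$ in a sequence of technical lemmas, substitute, simplify with the curvature symmetries, and integrate the resulting polynomials in $\tau$ over the double integral. One concrete caution, though: your displayed second-order expansion of the Jacobi field, with correction $\tfrac{1}{6}\tau(1-\tau)(2-\tau)R(tv+sw,u)(tv+sw)$, is right for the pure derivatives $\partial_t^2J$ and $\partial_s^2J$ but \emph{not} for the mixed one. Covariant derivatives along the two-parameter family do not commute, so $\partial_t\partial_s J$ is not the naive polarization of the diagonal terms; the commutators contribute an extra source term proportional to $R(v,w)u$ in the ODE for $\partial_t\partial_s J$, and the paper's Lemma \ref{longlem-6}(4) gives $\partial_t\partial_sJ\big|_{s=t=0}=\tfrac{\tau(\tau-1)}{3}\left[(\tau-2)R(w,u)v-(\tau+1)R(v,w)u\right]$, whereas your ansatz would yield $\tfrac{\tau(\tau-1)(\tau-2)}{6}\left(R(w,u)v+R(v,u)w\right)$. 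The discrepancy is precisely what produces the terms with coefficient $\tfrac{1}{3}$ (the $\left<R(\cdot,u)\cdot,R(v,w)u\right>$ family) as opposed to $\tfrac{4}{15}$ in the final formula, so carrying the symmetric ansatz through literally would give the wrong answer there. Since you already propose to record each $\partial_s^i\partial_t^j$ separately in lemmas, just make sure those lemmas are derived from the commutator identities rather than from a single symmetric Taylor expansion of $J$; the same remark applies to $\partial_t\partial_s U$, which is nonzero (equal to $\tfrac{\tau^2}{2}R(v,w)u$) even though $\partial_t^2U=\partial_s^2U=0$ in the Riemannian case.
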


The proof of Theorem \ref{main2} will be postponed to Section \ref{proof}. For the rest of this section, we will state and prove the consequences of Theorem \ref{main2}. 

\begin{thm}\label{main2-2}
Let $d$ be a Riemannian distance of non-negative sectional curvature on the manifold $M$. Assume that the cost $c=d^2$ satisfies the condition \Athreew. Then for each pair $(u,w)$ of orthogonal vectors $\left<u,w\right>=0$ for which the plane spanned by $u$ and $w$ has zero sectional curvature, we have  
\begin{equation}\label{main2-2-1}
\left<(\nabla_w R)(w,u)v,u\right>=0 
\end{equation}
and 
\[
\begin{split}
G(u,v,w)&:=\frac{1}{10}\left<(\nabla_w^2R)(v,u)v,u\right> 
-\frac{1}{5}\left<R(v,u)u,R(v,w)w\right>\\
&+\frac{2}{5}\left<(\nabla_v\nabla_w R)(w,u)v,u\right> 
+\frac{1}{10}\left<(\nabla_v^2R)(w,u)w,u\right> \\
&+\frac{4}{15}\left(\left<R(w,u)v,R(w,u)v\right>
+\left<R(v,u)w,R(w,u)v\right>\right)\\
&+\frac{1}{3}\left(\left<R(w,u)v,R(v,w)u\right>
+\left<R(v,u)w,R(v,w)u\right>\right)\geq 0.
\end{split}
\]
\end{thm}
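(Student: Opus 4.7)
The plan is to combine Theorem \ref{main2} with two minimum-principle arguments: one applied to the function $t \mapsto \MTW(u,tv,w)$, and one applied to the sectional-curvature polynomial on the tangent space at $x_0$. Fix an orthogonal pair $(u,w)$ at $x_0$ spanning a plane of zero sectional curvature. Because \Athreew\ holds and $(u,w)$ is orthogonal, the scalar function $\psi(t) := \MTW(u,tv,w)$ is non-negative on a neighbourhood of $t=0$, and by Theorem \ref{Loeper} it satisfies $\psi(0) = K(u,w)(|u|^2|w|^2 - \langle u,w\rangle^2) = 0$. Hence $t=0$ is a local minimum of $\psi$, forcing $\psi'(0)=0$ and $\psi''(0)\geq 0$.

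Next I will extract the algebraic consequences of the global non-negativity of sectional curvature together with its vanishing on $\mathrm{span}(u,w)$. Consider the polynomial $\phi(X,Y) := \langle R(X,Y)X,Y\rangle$ on $T_{x_0}M \oplus T_{x_0}M$: it is $\geq 0$ everywhere and vanishes at $(u,w)$, so $(u,w)$ is a (global) minimum. Differentiating in $X$ at $(u,w)$ and using the pair symmetry $\langle R(A,B)C,D\rangle = \langle R(C,D)A,B\rangle$ together with the skew-adjointness $\langle R(X,Y)Z,W\rangle = -\langle R(X,Y)W,Z\rangle$, I obtain $R(u,w)w = 0$; differentiating in $Y$ gives $R(u,w)u = 0$, and hence also $R(w,u)u = R(w,u)w = 0$. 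A parallel-transport variant of the same idea supplies a vanishing first covariant derivative: transport $u,w$ along a curve from $x_0$ with initial velocity $v$ to get parallel fields $U(s), W(s)$; then $s \mapsto \langle R(U,W)U,W\rangle$ is non-negative and vanishes at $s=0$, so its derivative there is zero, and since $\nabla_v U = \nabla_v W = 0$ at $s=0$, this reduces to $\langle (\nabla_v R)(w,u)w, u\rangle = 0$ for every $v$.

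With these identities in hand I feed the formulas of Theorem \ref{main2} through $\psi'(0)=0$ and $\psi''(0)\geq 0$. The vanishing of $\psi'(0)$ combined with $\langle (\nabla_v R)(w,u)w, u\rangle = 0$ immediately collapses to $\langle (\nabla_w R)(w,u)v, u\rangle = 0$, which is the first conclusion of Theorem \ref{main2-2}. For the second conclusion, I substitute $R(w,u)u = 0$ and $R(w,u)w = 0$ into the expression for $\psi''(0)$: the two summands $\tfrac{4}{15}\langle R(v,u)v, R(w,u)w\rangle$ and $-\tfrac{1}{5}\langle R(w,u)u, R(v,w)v\rangle$ are killed, and the remaining sum is precisely $G(u,v,w)$. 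Hence the inequality $\psi''(0) \geq 0$ becomes $G(u,v,w) \geq 0$.

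The main obstacle is essentially bookkeeping: one must verify that the algebraic identities extracted from globally non-negative sectional curvature annihilate exactly the two summands required to reduce $\psi''(0)$ to $G$, and no others. A minor secondary point is admissibility: since $c = d^2$, the pair $(x_0,x_0)$ lies in $\mathcal{O}$ and $d\exp^c|_{T_{x_0}M}$ is the identity at the origin, so $tv \in \tilde{\mathcal{O}}$ for all sufficiently small $t$, which legitimises applying \Athreew\ along the path. Finally, since $G(u,v,w)$ is homogeneous of degree $2$ in $v$, obtaining $G \geq 0$ for all small $v$ automatically upgrades to all $v$.
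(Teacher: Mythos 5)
Your proposal is correct and follows essentially the same route as the paper: derive $\psi'(0)=0$, $\psi''(0)\geq 0$ from \Athreew{} together with Loeper's formula, extract $R(u,w)u=R(w,u)w=0$ and $\left<(\nabla_vR)(w,u)w,u\right>=0$ from the non-negativity and vanishing of the sectional curvature, and substitute into the two formulas of Theorem \ref{main2}. Your first-variation argument for $R(u,w)u=R(w,u)w=0$ is just a restatement of the paper's ``semi-definite symmetric operator'' argument, and you correctly identify that exactly the two terms $\frac{4}{15}\left<R(v,u)v,R(w,u)w\right>$ and $-\frac{1}{5}\left<R(w,u)u,R(v,w)v\right>$ drop out, leaving $G$.
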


\begin{proof}
We extend the vectors $u,v,w$ to vector fields $U,V,W$, respectively. Moreover, we assume that $U,V,W$ are constant vector fields in a geodesic normal coordinate neighborhood of the point $x$. By assumption, the function $\left<R(W,U)W,U\right>$ has a minimum at $x$. It follows that 
\[
\left<(\nabla_vR)(w,u)w,u\right>=V\left<R(W,U)W,U\right>=0. 
\]
This proves the first equality. For the inequality involving $G$, we need to show that $R(u,w)u=R(w,u)w=0$. Indeed, we know that $R(u,\cdot)u$ is a symmetric operator. Since the manifold has non-negative sectional curvature, $R(u,\cdot)u$ is non-negative definite. We also have $\left<R(u,w)u,w\right>=0$, so it follows that $R(u,w)u=0$. A similar argument shows that $R(w,u)w=0$. 
\end{proof}

\begin{proof}[Proof of Theorem \ref{main2cor1}]
Since the manifold $M$ is two dimensional, $v=au+bw$ for some constants $a$ and $b$. It follows that (\ref{main2-2-1}) becomes 
\[
0=\left<(\nabla_w R)(w,u)v,u\right>=b\left<(\nabla_w R)(w,u)w,u\right>. 
\]

However, by the proof of Theorem \ref{main2-2}, $\left<(\nabla_w R)(w,u)w,u\right>=0$. Therefore, (\ref{main2-2-1}) is satisfied automatically. 

Since the Riemannian curvature $R$ satisfies $R(u,w)u=R(w,u)w=0$ and $M$ is 2-dimensional, the term $G$ is simplified to 
\[
\begin{split}
G(u,v,w)&=\frac{1}{10}\left<(\nabla_w^2R)(v,u)v,u\right> \\
&+\frac{2}{5}\left<(\nabla_v\nabla_w R)(w,u)v,u\right> 
+\frac{1}{10}\left<(\nabla_v^2R)(w,u)w,u\right> \\
&=\frac{3b^2}{5}\left<(\nabla_w^2R)(w,u)w,u\right> 
+\frac{ab}{10}\left<(\nabla_{w}\nabla_u R)(w,u)w,u\right>\\
&+\frac{ab}{2}\left<(\nabla_u\nabla_w R)(w,u)w,u\right>
+\frac{a^2}{10}\left<(\nabla_{u}^2R)(w,u)w,u\right> .
\end{split}
\]

Let us extend $u$ and $v$ to vector fields $U$ and $W$, respectively, which are constant in a geodesic normal coordinate neighborhood. Since the Gauss curvature at $x$ vanishes and the covariant derivatives satisfy  $\nabla_UV\Big|_x=\nabla_VU\Big|_x=\nabla_UU\Big|_x=\nabla_VV\Big|_x=0$, it follows that 
\[
\begin{split}
&\left<(\nabla_u\nabla_w R)(w,u)w,u\right>=\nabla_U\nabla_W\left<R(W,U)W,U\right>\Big|_x 
\\&=\nabla_W\nabla_U\left<R(W,U)W,U\right>\Big|_x =\left<(\nabla_w\nabla_u R)(w,u)w,u\right>. 
\end{split}
\]

Therefore, the formula for $G$ simplifies to 
\begin{equation}\label{Gsimplified}
\begin{split}
&G(u,v,w)=\frac{3b^2}{5}\left<(\nabla_w^2R)(w,u)w,u\right> \\
&+\frac{3ab}{5}\left<(\nabla_{w}\nabla_u R)(w,u)w,u\right>
+\frac{a^2}{10}\left<(\nabla_{u}^2R)(w,u)w,u\right> .
\end{split}
\end{equation}

Since the manifold $M$ has non-negative Gauss curvature and has zero Gauss curvature at $x$. We have 
\[
\left<(\nabla_w^2R)(w,u)w,u\right>=\nabla_W^2\left<R(W,U)W,U\right>\Big|_x\geq 0. 
\]

Therefore, if the quadratic in (\ref{Gsimplified}) satisfies $G(u,v,w)\geq 0$ for all $v$, then the discriminant is non-positive and it follows that 
\[
3\left<(\nabla_{w}\nabla_u R)(w,u)w,u\right>^2\leq 2\left<(\nabla_w^2R)(w,u)w,u\right> \left<(\nabla_{u}^2R)(w,u)w,u\right> .
\]

\end{proof}

\

\section{Example}

In this section, we discuss the proof of Theorem \ref{main3}. Recall that we consider the following Riemannian metric $\left<\cdot,\cdot\right>$ on $\Real^2$:
\begin{equation}\label{conformalagain}
\left<u,v\right>=e^{2f(x)}u\cdot v. 
\end{equation}

Let us denote the gradient and the Laplacian of the usual Euclidean metric by $\nabla$ and $\Delta$, respectively. Let $\tilde\nabla$ and $R$ be, respectively, the Levi-Civita connection and the Riemannian curvature of the Riemannian metric $\left<\cdot,\cdot\right>$. 

\begin{lem}\label{Gauss}
The Levi-Civita connection $\tilde\nabla$ and the Riemannian curvature $R$ of the metric $\left<\cdot,\cdot\right>$ are given by 
\[
\tilde\nabla_{\partial_x}\partial_y=\tilde\nabla_{\partial_y}\partial_x=f_x\partial_y+f_y\partial_x,\quad \tilde\nabla_{\partial_x}\partial_x=-\tilde\nabla_{\partial_y}\partial_y=f_x\partial_x-f_y\partial_y
\]
\[
K=-(\Delta f)\cdot e^{-2f} 
\]
where $K$ denotes the Gauss curvature with respect to the Riemannian metric $\left<\cdot,\cdot\right>$. 
\end{lem}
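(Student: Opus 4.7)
The plan is to compute the Levi-Civita connection directly from Koszul's formula, then extract the Gauss curvature from the resulting Riemann tensor. Since the metric is conformally Euclidean with $g_{11}=g_{22}=e^{2f}$ and $g_{12}=0$, everything reduces to the fact that the Christoffel symbols are built out of logarithmic derivatives of the conformal factor.

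First I would apply Koszul's formula. Because $\partial_x$ and $\partial_y$ commute, Koszul reduces to
\[
2\left<\tilde\nabla_{\partial_i}\partial_j,\partial_k\right>
=\partial_i g_{jk}+\partial_j g_{ik}-\partial_k g_{ij},
\]
and the six required quantities come from differentiating $e^{2f}$ in $x$ and $y$. For instance, with $(i,j)=(x,y)$ and $k=x$ one finds $2\left<\tilde\nabla_{\partial_x}\partial_y,\partial_x\right>=\partial_y e^{2f}=2f_y e^{2f}$, and with $k=y$ one gets $\partial_x e^{2f}=2f_x e^{2f}$; dividing by $g_{ii}=e^{2f}$ yields the stated expression for $\tilde\nabla_{\partial_x}\partial_y$. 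The same mechanical procedure handles $\tilde\nabla_{\partial_x}\partial_x$ and $\tilde\nabla_{\partial_y}\partial_y$ and gives the claimed sign difference between them because of the opposite sign in the $-\partial_k g_{ij}$ term.

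Next I would feed these formulas into $R(\partial_x,\partial_y)\partial_y=\tilde\nabla_{\partial_x}\tilde\nabla_{\partial_y}\partial_y-\tilde\nabla_{\partial_y}\tilde\nabla_{\partial_x}\partial_y$. Using the first part of the lemma, this becomes
\[
\tilde\nabla_{\partial_x}(-f_x\partial_x+f_y\partial_y)-\tilde\nabla_{\partial_y}(f_y\partial_x+f_x\partial_y).
\]
Expanding the covariant derivatives via Leibniz and the connection formulas from Step~1, the pure second-derivative contributions produce $-(f_{xx}+f_{yy})\partial_x$, while the quadratic terms in first derivatives cancel in pairs (this is exactly where the magic of two dimensions happens). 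Finally, using $K=\left<R(\partial_x,\partial_y)\partial_y,\partial_x\right>/(g_{xx}g_{yy}-g_{xy}^2)=e^{-4f}\left<R(\partial_x,\partial_y)\partial_y,\partial_x\right>$ and the fact that $\left<\partial_x,\partial_x\right>=e^{2f}$, I obtain $K=-(\Delta f)e^{-2f}$.

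The only place where care is really needed is the cancellation of the first-derivative-squared terms in the curvature computation: one must verify that, for example, the $f_x^2\partial_x$ and $f_y^2\partial_x$ contributions coming from the two sides of the commutator cancel against each other, leaving only the Laplacian. This is the step where sign errors are most likely, but it follows directly from Step~1 and no additional geometric input.
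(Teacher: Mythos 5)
Your proposal is correct, but it takes a different route from the paper: the paper's entire proof is a one-line citation to the standard conformal-change formulas (\cite[Theorem 1.159]{Be}), whereas you carry out the computation from scratch via the Koszul formula and the curvature commutator. I checked your steps: the Koszul reduction for commuting coordinate fields gives exactly the stated Christoffel data ($\Gamma^1_{11}=f_x$, $\Gamma^2_{11}=-f_y$, $\Gamma^1_{12}=f_y$, $\Gamma^2_{12}=f_x$, $\Gamma^1_{22}=-f_x$, $\Gamma^2_{22}=f_y$), and expanding $R(\partial_x,\partial_y)\partial_y=\tilde\nabla_{\partial_x}(-f_x\partial_x+f_y\partial_y)-\tilde\nabla_{\partial_y}(f_y\partial_x+f_x\partial_y)$ does produce $(-f_{xx}-f_x^2+f_y^2)\partial_x$ minus $(f_{yy}+f_y^2-f_x^2)\partial_x$, with the $\partial_y$-components $(f_{xy}+2f_xf_y)\partial_y$ cancelling identically, so $R(\partial_x,\partial_y)\partial_y=-(\Delta f)\partial_x$ and $K=e^{-4f}\left<R(\partial_x,\partial_y)\partial_y,\partial_x\right>=-(\Delta f)e^{-2f}$ as claimed. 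Your approach buys self-containedness and makes the sign conventions explicit (which matters later in the paper, where these formulas feed into Theorem \ref{main3}); the paper's citation buys brevity and delegates the bookkeeping to a standard reference. Either is acceptable; there is no gap.
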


\begin{proof}
It follows immediately from, for instance, \cite[Theorem 1.159]{Be}. 
\end{proof}

\begin{prop}\label{GaussPos}
Let $f(x,y)=x^3y+ax^2y^2+xy^3$. Then the Riemannian metric defined by (\ref{conformalagain}) has non-negative Gauss curvature if 
\begin{equation}\label{egpositive}
a\leq -3. 
\end{equation}
\end{prop}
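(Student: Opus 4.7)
The plan is to reduce the claim to a pointwise inequality on the Euclidean Laplacian of $f$, using Lemma \ref{Gauss} to translate the sign of the Gauss curvature into the sign of $\Delta f$. Since $e^{-2f}>0$ everywhere, the formula $K=-(\Delta f)e^{-2f}$ shows that $K\geq 0$ on $\Real^2$ if and only if $\Delta f\leq 0$ on $\Real^2$. So the proposition reduces entirely to showing $\Delta f\leq 0$ under the hypothesis $a\leq -3$.

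Next I would carry out the direct computation of $\Delta f$ for $f(x,y)=x^3y+ax^2y^2+xy^3$. Differentiating twice in $x$ gives $f_{xx}=6xy+2ay^2$ and in $y$ gives $f_{yy}=2ax^2+6xy$, so
\[
\Delta f = 2a(x^2+y^2)+12xy = 2\bigl(ax^2+6xy+ay^2\bigr).
\]
Thus the whole question becomes: when is the quadratic form $Q(x,y):=ax^2+6xy+ay^2$ negative semidefinite on $\Real^2$?

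For this I would invoke the standard criterion for a real symmetric $2\times 2$ matrix $\bigl(\begin{smallmatrix}a & 3\\ 3 & a\end{smallmatrix}\bigr)$: it is negative semidefinite precisely when its diagonal entry $a$ is nonpositive and its determinant $a^2-9$ is nonnegative. The first condition gives $a\leq 0$ and the second gives $|a|\geq 3$, so both hold simultaneously exactly when $a\leq -3$. Under this hypothesis $Q\leq 0$, hence $\Delta f\leq 0$, hence $K\geq 0$, completing the proof.

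There is essentially no main obstacle; the statement is a direct consequence of Lemma \ref{Gauss} together with a one-line linear algebra fact about negative semidefiniteness of a symmetric $2\times 2$ matrix. The only bookkeeping point worth double-checking is the coefficient $12xy$ in $\Delta f$, which must match the off-diagonal entry $3$ in the matrix after pulling out a factor of $2$; this ensures the discriminant threshold lands at $a=-3$ rather than some other value.
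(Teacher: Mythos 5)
Your proof is correct and follows essentially the same route as the paper: compute $\Delta f = 2ax^2+12xy+2ay^2$, use Lemma \ref{Gauss} to reduce non-negativity of $K$ to non-positivity of this quadratic, and then apply the semidefiniteness criterion (the paper phrases it as $a<0$ together with $144-16a^2\leq 0$, which is your determinant condition in disguise). No gaps.
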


\begin{proof}
A computation shows that 
\[
\Delta f(x,y)=2ax^2+12xy+2ay^2. 
\]

It follows from Lemma \ref{Gauss} that the Gauss curvature is non-negative if and only if the quadratic $2ax^2+12xy+2ay^2$ is non-positive.  This, in turn, is equivalent to $a<0$ and $144-16a^2\leq 0$. 
\end{proof}

\begin{proof}[Proof of Theorem \ref{main3}]
Assume that the cost $c=d^2$ satisfies the weak MTW conditon. By Proposition \ref{GaussPos} and Theorem \ref{Loeper}, we have $a\leq -3$. Let $U$ and $W$ be two constant vector fields which are orthonormal with respect to the Euclidean metric. Assume that $U\Big|_{(0,0)}=u$ and $W\Big|_{(0,0)}=w$. Then
\begin{equation}\label{mainpf3-1}
\begin{split}
\left<R(U,W)U,W\right>&=e^{4f}K=\\
&=-e^{2f}(\Delta f)\\
&=-e^{2f}(2ax^2+12xy+2ay^2).
\end{split}  
\end{equation}

It follows from Lemma \ref{Gauss} that  
\[
\nabla_UU\Big|_{(0,0)}=\nabla_UW\Big|_{(0,0)}=\nabla_U^2U\Big|_{(0,0)}=\nabla_U^2W\Big|_{(0,0)}=0.
\] Therefore, 
\begin{equation}\label{mainpf3-2}
\left<\tilde\nabla_u^2R(u,w)u,w\right>=\tilde\nabla_U^2\left<R(U,W)U,W\right>\Big|_{(0,0)}. 
\end{equation}

Since $\tilde\nabla g=e^{-2f}\nabla g$ for each smooth function $g$, it follows from Lemma \ref{Gauss} again that 
\[
\left<\tilde\nabla_u\tilde\nabla g,u\right>=\left<\nabla_u\nabla g,u\right>. 
\]

It follows from this, (\ref{mainpf3-1}), and (\ref{mainpf3-2}) that 
\[
\left<\tilde\nabla_u^2R(u,w)u,w\right>=-4(au_1^2+6u_1u_2+au_2^2). 
\]

Similar calculations show that 
\[
\left<\tilde\nabla_w^2R(u,w)u,w\right>=-4(aw_1^2+6w_1w_2+aw_2^2) 
\]
and
\[
\left<\tilde\nabla_w\tilde\nabla_uR(u,w)u,w\right>=-4(au_1w_1+3u_1w_2+3w_1u_2+au_2w_2). 
\]

Since $u$ and $w$ are orthogonal, we can assume that $w$ is given by $w=-u_2\partial_x+u_1\partial_y$. It follows that 
\begin{equation}\label{main3pf3}
\begin{split}
&3\left<(\nabla_{w}\nabla_u R)(w,u)w,u\right>^2\\
&-2\left<(\nabla_w^2R)(w,u)w,u\right> \left<(\nabla_{u}^2R)(w,u)w,u\right> \\
&=16[(27-2a^2)u_2^4+(18-4a^2)u_1^2u_2^2+(27-2a^2)u_1^4].
\end{split}
\end{equation}

If $27-2a^2>0$, then we can set $u_1=0$ and see that the above expression is positive for some $u$. This contradicts with Theorem \ref{main2}. Therefore, we have $a\leq -\sqrt{\frac{27}{2}}$. Finally, we remark that the expression in (\ref{main3pf3}) is non-positive if $a\leq -\sqrt{\frac{27}{2}}$.
\end{proof}

\

\section{Proof of Theorem \ref{main2}}\label{proof}

In this section, we will give the proof of Theorem \ref{main2}. Let us first recall the notation that we are using. Let $\tau\mapsto U_{s,t}(\tau)$ be the parallel translation of the vector $u$ along the geodesic $\tau\mapsto \gamma_{s,t}(\tau):=\exp(\tau(tv+sw))$. Let $\tau\mapsto J_{s,t}(\tau)$ be the Jacobi field defined along the geodesic $\gamma_{s,t}$ which satisfies the conditions $J_{s,t}(0)=u$, $J_{s,t}(1)=0$, and $J_{s,t}(\tau)\neq 0$ for all $\tau$ in the interval $(0,1)$. 

First, it follows from (1) of Lemma \ref{longlem-1} and (\ref{long-1}) that 
\[
\begin{split}
&\partial_t\partial_s^2\left<R(\partial_\tau\gamma,J)\partial_\tau\gamma,U\right>\Big|_{s=t=0}
\\&=\left<R(\partial_s^2\partial_\tau\gamma,J)\partial_t\partial_\tau\gamma,U\right>
+\left<R(\partial_t\partial_\tau\gamma,J)\partial_s^2\partial_\tau\gamma,U\right>\\
&+2\left<(\partial_s R)(\partial_s\partial_\tau\gamma,J)\partial_t\partial_\tau\gamma,U\right> +2\left<(\partial_sR)(\partial_t\partial_\tau\gamma,J)\partial_s\partial_\tau\gamma,U\right>\\
&+2\left<R(\partial_s\partial_\tau\gamma,\partial_sJ)\partial_t\partial_\tau\gamma,U\right> +2\partial_t(\left<R(\partial_s\partial_\tau\gamma,J)\partial_s\partial_\tau\gamma,U\right>)\\
&+2\left<R(\partial_s\partial_\tau\gamma,J)\partial_t\partial_\tau\gamma,\partial_sU\right> +2\left<R(\partial_t\partial_\tau\gamma,\partial_sJ)\partial_s\partial_\tau\gamma,U\right>\\ &+2\left<R(\partial_t\partial_\tau\gamma,J)\partial_s\partial_\tau\gamma,\partial_sU\right>\Big|_{s=t=0}.
\end{split}
\]

By (2') of Lemma \ref{longlem-1}, (1) of Lemma \ref{longlem-2}, Lemma \ref{longlem-3}, (1) of Lemma \ref{longlem-4},  and (1) of Lemma \ref{longlem-6}, the above equation simplifies to 
\begin{equation}\label{main2-1}
\begin{split}
&\partial_t\partial_s^2\left<R(\partial_\tau\gamma,J)\partial_\tau\gamma,U\right>\Big|_{s=t=0}
\\&=2\tau(1-\tau)\left<(\nabla_wR)(w,u)v,u\right> +2\tau(1-\tau)\left<(\nabla_wR)(v,u)w,u\right>\\
& +2\partial_t(\left<R(\partial_s\partial_\tau\gamma,J)\partial_s\partial_\tau\gamma,U\right>)\Big|_{s=t=0}.
\end{split}
\end{equation}

By (1) of Lemma \ref{longlem-2}, (2) of Lemma \ref{longlem-4}, and (1) of Lemma \ref{longlem-6}, we also have 
\[
\partial_t(\left<R(\partial_s\partial_\tau\gamma,J)\partial_s\partial_\tau\gamma,U\right>)\Big|_{s=t=0} =\left<(\partial_tR)(\partial_s\partial_\tau\gamma,J)\partial_s\partial_\tau\gamma,U\right>\Big|_{s=t=0}.  
\]

Therefore, it follows from (2') of Lemma \ref{longlem-1} and Lemma \ref{longlem-3} that 
\[
\partial_t(\left<R(\partial_s\partial_\tau\gamma,J)\partial_s\partial_\tau\gamma,U\right>)\Big|_{s=t=0} =\tau(1-\tau)\left<(\nabla_vR)(w,u)w,u\right>.  
\]

If we combine this with (\ref{main2-1}), then we have 
\[
\begin{split}
&\partial_t\partial_s^2\left<R(\partial_\tau\gamma,J)\partial_\tau\gamma,U\right>\Big|_{s=t=0}
\\&=2\tau(1-\tau)\left<(\nabla_wR)(w,u)v,u\right> +2\tau(1-\tau)\left<(\nabla_wR)(v,u)w,u\right>\\
& +2\tau(1-\tau)\left<(\nabla_vR)(w,u)w,u\right>.
\end{split}
\]

Therefore, by Theorem \ref{main1}, we have 
\[
\begin{split}
&\partial_t\MTW(u,tv,w)\Big|_{t=0} \\
&=\frac{1}{4}\Big(\left<(\nabla_wR)(w,u)v,u\right> +\left<(\nabla_wR)(v,u)w,u\right>
\\ &+\left<(\nabla_vR)(w,u)w,u\right>\Big). 
\end{split}
\]

Finally, by taking covariant derivative of the property 
\[
\left<R(w,u)v,u\right>=\left<R(v,u)w,u\right>
\] 
of the Riemannian curvature $R$, we have 
\[
\begin{split}
&\partial_t\MTW(u,tv,w)\Big|_{t=0} \\
&=\frac{1}{2}\left<(\nabla_wR)(w,u)v,u\right>+\frac{1}{4}\left<(\nabla_vR)(w,u)w,u\right>. 
\end{split}
\]

By (1) of Lemma \ref{longlem-1}, (1) of Lemma \ref{longlem-2}, (1) and (2) of Lemma \ref{longlem-4}, (1) of Lemma \ref{longlem-6}, if we differentiate (\ref{long-1}) twice with respect to $t$, then 
\[
\begin{split}
&\partial_t^2\partial_s^2\left<R(\partial_\tau\gamma,J)\partial_\tau\gamma,U\right>\Big|_{t=s=0}\\
&=2\left<(\partial_s^2R)(\partial_t\partial_\tau\gamma,J)\partial_t\partial_\tau\gamma,U\right> +2\left<R(\partial_t\partial_s^2\partial_\tau\gamma,J)\partial_t\partial_\tau\gamma,U\right>\\
&+2\left<R(\partial_t\partial_\tau\gamma,\partial_s^2 J)\partial_t\partial_\tau\gamma,U\right> +2\left<R(\partial_t\partial_\tau\gamma,J)\partial_t\partial_s^2\partial_\tau\gamma,U\right>\\
&+2\left<R(\partial_t\partial_\tau\gamma,J)\partial_t\partial_\tau\gamma,\partial_s^2U\right> 
+4\left<(\partial_t\partial_s R)(\partial_s\partial_\tau\gamma,J)\partial_t\partial_\tau\gamma,U\right>\\
&+4\left<(\partial_sR)(\partial_t\partial_\tau\gamma,\partial_sJ)\partial_t\partial_\tau\gamma,U\right> +4\left<(\partial_t\partial_sR)(\partial_t\partial_\tau\gamma,J)\partial_s\partial_\tau\gamma,U\right>\\
&+4\left<(\partial_sR)(\partial_t\partial_\tau\gamma,J)\partial_t\partial_\tau\gamma,\partial_sU\right> +4\left<R(\partial_s\partial_\tau\gamma,\partial_t\partial_sJ)\partial_t\partial_\tau\gamma,U\right>\\
&+2\left<(\partial_t^2R)(\partial_s\partial_\tau\gamma,J)\partial_s\partial_\tau\gamma,U\right> 
+2\left<R(\partial_t^2\partial_s\partial_\tau\gamma,J)\partial_s\partial_\tau\gamma,U\right>\\
&+2\left<R(\partial_s\partial_\tau\gamma,\partial_t^2J)\partial_s\partial_\tau\gamma,U\right>
+2\left<R(\partial_s\partial_\tau\gamma,J)\partial_t^2\partial_s\partial_\tau\gamma,U\right>\\
&+4\left<R(\partial_s\partial_\tau\gamma,J)\partial_t\partial_\tau\gamma,\partial_t\partial_sU\right>
+4\left<R(\partial_t\partial_\tau\gamma,\partial_t\partial_sJ)\partial_s\partial_\tau\gamma,U\right>\\ &+4\left<R(\partial_t\partial_\tau\gamma,\partial_sJ)\partial_t\partial_\tau\gamma,\partial_sU\right> +4\left<R(\partial_t\partial_\tau\gamma,J)\partial_s\partial_\tau\gamma,\partial_t\partial_sU\right>\Big|_{s=t=0}.
\end{split}
\]

By (2') of Lemma \ref{longlem-1}, Lemma \ref{longlem-3}, (3) of Lemma \ref{longlem-4}, Lemma \ref{longlem-5}, (1) and (2)of Lemma \ref{longlem-6}, the above equation simplifies to 
\[
\begin{split}
&\partial_t^2\partial_s^2\left<R(\partial_\tau\gamma,J)\partial_\tau\gamma,U\right>\Big|_{t=s=0}\\
&=2(1-\tau)\tau^2\left<(\nabla_w^2R)(v,u)v,u\right> 
+2(1-\tau)\tau^2\left<R(R(v,w)w,u)v,u\right>\\
&+\frac{2\tau(\tau-1)(\tau-2)}{3}\left(\left<R(v,R(w,u)w)v,u\right>+\left<R(w,R(v,u)v)w,u\right>\right) \\
&+4(1-\tau)\tau^2\left<(\nabla_v\nabla_w R)(w,u)v,u\right> 
+4(1-\tau)\tau^2\left<(\nabla_v\nabla_wR)(v,u)w,u\right>\\
&+2(1-\tau)\tau^2\left<(\nabla_v^2R)(w,u)w,u\right> 
+2(1-\tau)\tau^2\left<R(R(v,w)v,u)w,u\right>\\
&+2(1-\tau)\tau^2\left<R(w,u)R(v,w)v,u\right> 
+2(1-\tau)\tau^2\left<R(v,u)R(v,w)w,u\right>\\
&+2(1-\tau)\tau^2\left<R(w,u)v,R(v,w)u\right> 
+2(1-\tau)\tau^2\left<R(v,u)w,R(v,w)u\right>\\
&+\frac{4\tau(\tau-1)(\tau-2)}{3}\left(\left<R(v,R(w,u)v)w,u\right>+\left<R(w,R(w,u)v)v,u\right>\right)\\
&-\frac{4\tau(\tau-1)(\tau+1)}{3}\left(\left<R(v,R(v,w)u)w,u\right>+\left<R(w,R(v,w)u)v,u\right>\right).
\end{split}
\]

We simplify the above equation further by using the property of the Riemannian curvature $R$. 
\[
\begin{split}
&\partial_t^2\partial_s^2\left<R(\partial_\tau\gamma,J)\partial_\tau\gamma,U\right>\Big|_{t=s=0}\\
&=2(1-\tau)\tau^2\left<(\nabla_w^2R)(v,u)v,u\right> 
-4(1-\tau)\tau^2\left<R(v,u)u,R(v,w)w\right>\\
&+\frac{4\tau(\tau-1)(\tau-2)}{3}\left<R(v,u)v,R(w,u)w\right>\\ 
&+8(1-\tau)\tau^2\left<(\nabla_v\nabla_w R)(w,u)v,u\right> \\
&+2(1-\tau)\tau^2\left<(\nabla_v^2R)(w,u)w,u\right> 
-4(1-\tau)\tau^2\left<R(w,u)u,R(v,w)v\right>\\
&+2(1-\tau)\tau^2\left<R(w,u)v,R(v,w)u\right> 
+2(1-\tau)\tau^2\left<R(v,u)w,R(v,w)u\right>\\
&+\frac{4\tau(\tau-1)(\tau-2)}{3}\left(\left<R(w,u)v,R(w,u)v\right>+\left<R(v,u)w,R(w,u)v\right>\right)\\
&-\frac{4\tau(\tau-1)(\tau+1)}{3}\left(\left<R(w,u)v,R(v,w)u\right>+\left<R(v,u)w,R(v,w)u\right>\right).
\end{split}
\]

If we integrate the above twice with respect to $\tau$ and multiply by $3/2$, then we have 
\[
\begin{split}
&\partial_t^2\MTW(u,tv,w)\Big|_{t=0}\\
&=\frac{1}{10}\left<(\nabla_w^2R)(v,u)v,u\right> 
-\frac{1}{5}\left<R(v,u)u,R(v,w)w\right>\\
&+\frac{4}{15}\left<R(v,u)v,R(w,u)w\right> 
+\frac{2}{5}\left<(\nabla_v\nabla_w R)(w,u)v,u\right> \\
&+\frac{1}{10}\left<(\nabla_v^2R)(w,u)w,u\right> 
-\frac{1}{5}\left<R(w,u)u,R(v,w)v\right>\\
&+\frac{4}{15}\left(\left<R(w,u)v,R(w,u)v\right>+\left<R(v,u)w,R(w,u)v\right>\right)\\
&+\frac{1}{3}\left(\left<R(w,u)v,R(v,w)u\right>+\left<R(v,u)w,R(v,w)u\right>\right).
\end{split}
\]

\

\section{Appendix 1: Lemmas for Natural Mechanical Actions}

In the two appendices, we give the proof of various lemmas used in the previous sections. The first appendix is devoted to those lemmas which are related to the natural mechanical actions. The rest of the lemmas needed only in the Riemannian case are done in the second appendix. Let us first recall our notations. Let $u$, $v$, and $w$ be tangent vectors based at a point $x$ which is a critical point of the potential $V$. Let $\tau\mapsto\gamma_{s,t}(\tau)$ be the curve of least action with initial velocity $tv+sw$. Let $\tau\mapsto U_{s,t}(\tau)$ be the parallel translation of the vector $u$ along the curve $\tau\mapsto \gamma_{s,t}(\tau)$. Let $\tau\mapsto J_{s,t}(\tau)$ be the Jacobi field defined along the curve $\tau\mapsto\gamma_{s,t}(\tau)$ by the Jacobi equation 
\[
\partial_\tau^2J+R(\partial+\tau\gamma,J)\partial_\tau\gamma+\Hess V(J)=0
\]
and satisfies the conditions $J_{s,t}(0)=u$, $J_{s,t}(1)=0$, and $J_{s,t}(\tau)\neq 0$ for all $\tau$ in the interval $(0,1)$. Let $\tau\mapsto \bar v(\tau)$ be the solution to the initial value problem 
\[
\partial_\tau^2 \bar v=-\Hess V_x(\bar v),\quad \bar v\Big|_{\tau=0}=0,\quad \partial_\tau\bar v\Big|_{\tau=0}=v. 
\]
Similarly, let $\bar w$ be the solution to the above initial value problem with $v$ replace by $w$. Let $\tilde u$ be the solution to the boundary value problem 
\[
\partial_\tau^2 \tilde u=-\Hess V_x(\tilde u),\quad \tilde u\Big|_{\tau=0}=u,\quad \tilde u\Big|_{\tau=1}=0, \quad \tilde u\neq 0 \quad \text{if } 0<\tau<1. 
\]

\begin{lem}\label{longlem-1}
The family of curves $\gamma$ satisfies the following:
\begin{enumerate}
\item $\partial_\tau\gamma\Big|_{s=t=0}=0$, 
\item $\partial_t\gamma\Big|_{s=t=0}=\bar v$, \quad $\partial_s\gamma\Big|_{s=t=0}=\bar w$, 
\end{enumerate}
In particular, if we are in the Riemannian case where the potential $V\equiv 0$, then we have 
\[
(2')\quad\quad\quad  \partial_t\gamma\Big|_{s=t=0}=\tau v, \quad \partial_s\gamma\Big|_{s=t=0}=\tau w.
\]
\end{lem}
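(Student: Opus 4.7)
The plan is to derive all three statements by linearizing the least-action equation $\partial_\tau^2 \gamma_{s,t} = -\nabla V_{\gamma_{s,t}}$ around $s=t=0$ and invoking uniqueness of solutions to linear ODEs along a curve. Recall the initial conditions $\gamma_{s,t}(0) = x$ and $\partial_\tau \gamma_{s,t}(0) = tv + sw$ for every $(s,t)$.

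For (1), I would observe that at $s = t = 0$ the initial velocity is $0$ and the initial position is $x$, which is a critical point of $V$, so $\nabla V_x = 0$. Hence the constant curve $\tau \mapsto x$ solves the least-action equation with the prescribed initial conditions, and by uniqueness $\gamma_{0,0}(\tau) \equiv x$. This immediately gives $\partial_\tau \gamma|_{s=t=0} = 0$.

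For (2), I would set $W(\tau) := \partial_t \gamma_{s,t}(\tau)\big|_{s=t=0}$ (a vector field along the constant curve $\gamma_{0,0} \equiv x$) and differentiate the least-action equation in $t$. The left side yields $\partial_\tau^2 W$, and the right side $-\partial_t (\nabla V)\circ \gamma_{s,t}\big|_{s=t=0}$ equals $-\nabla_W \nabla V_x = -\Hess V_x(W)$, since the linearization of the gradient along a variation that sits over the basepoint $x$ is just the Hessian at $x$. The initial data for $W$ come from differentiating $\gamma_{s,t}(0) = x$ and $\partial_\tau \gamma_{s,t}(0) = tv + sw$ in $t$, giving $W(0) = 0$ and $\partial_\tau W(0) = v$. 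This linear initial value problem is precisely the one defining $\bar v$, so $W = \bar v$ by uniqueness. The identity $\partial_s \gamma|_{s=t=0} = \bar w$ follows by the same argument with $s$ in place of $t$.

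For (2'), the Riemannian case $V \equiv 0$ reduces the equation for $\bar v$ to $\partial_\tau^2 \bar v = 0$ with $\bar v(0) = 0$ and $\partial_\tau \bar v(0) = v$, whose unique solution is $\bar v(\tau) = \tau v$; analogously $\bar w(\tau) = \tau w$. The only real subtlety in the whole argument is the care required in computing $\partial_t \nabla V_{\gamma_{s,t}}\big|_{s=t=0}$ as a covariant derivative along the variation based at $x$; once one accepts that this equals $\Hess V_x$ applied to the variation field (which is legitimate precisely because $\nabla V_x = 0$, so connection-dependent correction terms vanish), the rest of the argument is a direct comparison of linear initial value problems.
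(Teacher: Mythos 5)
Your proposal is correct and follows essentially the same route as the paper: (1) comes from uniqueness of the constant solution of Newton's equation at a critical point of $V$, and (2) from linearizing that equation in $t$ and matching the resulting initial value problem with the one defining $\bar v$. The only elision is that passing from $\partial_t\partial_\tau^2\gamma$ to $\partial_\tau^2\partial_t\gamma$ produces a curvature commutator term $R(\partial_t\gamma,\partial_\tau\gamma)\partial_\tau\gamma$, which the paper writes out and kills using (1); your argument implicitly does the same, so this is a presentational rather than a substantive difference.
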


\begin{proof}
Recall that $\gamma$ satisfies the Newton's equation $\partial_\tau^2\gamma=-\nabla V_\gamma$ with initial condition $\partial_\tau\gamma\Big|_{\tau=0}=tv+sw$. Since $x$ is a critical point of the potential $V$, it follows that $\gamma\Big|_{s=t=0}\equiv x$ is the solution to the above initial value problem with $s=t=0$. Therefore, (1) follows immediately from this. 

If we differentiate the Newton's equation with respect to $t$, then we have 
\[
R(\partial_t\gamma,\partial_\tau\gamma)\partial_\tau\gamma+\partial_\tau^2\partial_t\gamma =\partial_t\partial_\tau^2\gamma=-\Hess V_\gamma(\partial_t\gamma). 
\]
If we set $s=t=0$ and apply (1), we have 
\[
\partial_\tau^2\partial_t\gamma\Big|_{s=t=0} =-\Hess V_x(\partial_t\gamma\Big|_{s=t=0}). 
\]
We also have the initial conditions 
\[
\partial_t\gamma\Big|_{t=s=\tau=0}=0, \quad \partial_\tau\partial_t\gamma\Big|_{t=s=\tau=0}=v.
\] 
It follows that $\bar v=\partial_t\gamma\Big|_{s=t=0}$. 
\end{proof}

\begin{lem}\label{longlem-2}
The family $U$ of parallel vector fields satisfies
\begin{enumerate}
\item $\partial_sU\Big|_{s=t=0}=\partial_tU\Big|_{s=t=0}=0$,
\item $\partial_s^2U\Big|_{s=t=0}=\int_0^{\bar\tau} R(\partial_\tau\bar w,\bar w)ud\tau$, 
\item $\partial_t^2 U\Big|_{s=t=0}=\int_0^{\bar\tau} R(\partial_\tau\bar v,\bar v)ud\tau$. 
\end{enumerate}

In particular, $\partial_s^2U\Big|_{s=t=0}=\partial_t^2U\Big|_{s=t=0}=0$ in the Riemannian case where $V\equiv 0$. 
\end{lem}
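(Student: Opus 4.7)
The plan is to exploit the parallel transport equation $\nabla_\tau U=0$ together with the curvature commutation identity $\nabla_s\nabla_\tau X - \nabla_\tau\nabla_s X = R(\partial_s\gamma,\partial_\tau\gamma)X$. By Lemma \ref{longlem-1}(1) we have $\partial_\tau\gamma\Big|_{s=t=0}=0$, so $\gamma\Big|_{s=t=0}\equiv x$, and since the initial value $U(0)=u$ does not depend on $s$ or $t$, this forces $U\Big|_{s=t=0}\equiv u$ along that constant curve.

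For part (1), I apply $\nabla_s$ to the parallel equation and commute to obtain
\[
\nabla_\tau\nabla_s U = -R(\partial_s\gamma,\partial_\tau\gamma)U.
\]
At $s=t=0$ the right-hand side is zero because $\partial_\tau\gamma$ is, so $\nabla_s U\Big|_{s=t=0}$ is parallel along the constant curve. Combined with the initial condition $\nabla_s U\Big|_{\tau=0}=0$ (which holds because $U(0)=u$ is $s$-independent), this gives $\nabla_s U\Big|_{s=t=0}\equiv 0$. An identical argument with $t$ in place of $s$ yields the other half of (1).

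For part (2), I differentiate once more in $s$ and commute again to get
\[
\nabla_\tau\nabla_s^2 U = -\nabla_s\bigl[R(\partial_s\gamma,\partial_\tau\gamma)U\bigr] - R(\partial_s\gamma,\partial_\tau\gamma)\nabla_s U.
\]
At $s=t=0$ the last term vanishes by (1) and by $\partial_\tau\gamma\Big|_{s=t=0}=0$. In the product-rule expansion of the first term, the summands involving $(\nabla_s R)(\partial_s\gamma,\partial_\tau\gamma)U$ and $R(\nabla_s\partial_s\gamma,\partial_\tau\gamma)U$ carry an explicit $\partial_\tau\gamma$ and die, while the summand $R(\partial_s\gamma,\partial_\tau\gamma)\nabla_s U$ dies by (1). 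What survives is
\[
\nabla_\tau\nabla_s^2 U\Big|_{s=t=0} = -R(\partial_s\gamma,\nabla_s\partial_\tau\gamma)U\Big|_{s=t=0}.
\]
Using the torsion-free identity $\nabla_s\partial_\tau\gamma=\nabla_\tau\partial_s\gamma$ together with Lemma \ref{longlem-1}(2), this simplifies to $R(\partial_\tau\bar w,\bar w)u$. Integrating in $\tau$ from the zero initial value $\nabla_s^2 U\Big|_{\tau=0}=0$ yields the stated formula in (2), and part (3) is the same calculation with $(s,\bar w)$ replaced by $(t,\bar v)$. The Riemannian specialization is immediate from Lemma \ref{longlem-1}(2$'$), since $\bar w=\tau w$ and $\bar v=\tau v$ give $R(\partial_\tau\bar w,\bar w)=R(w,\tau w)=0$ by antisymmetry.

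The main obstacle is the bookkeeping in the product-rule expansion in (2): one must verify that every term which does not already contain $\partial_\tau\gamma$ as a factor instead carries $\nabla_s U$, and so vanishes at $s=t=0$ by (1). Once this accounting is in place, the rest of the proof is a one-variable ODE integration along the constant curve $\gamma\Big|_{s=t=0}\equiv x$.
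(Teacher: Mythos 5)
Your proof is correct and follows essentially the same route as the paper: commute $\partial_\tau$ past the $s$- (or $t$-) derivatives via the curvature identity, kill all terms containing an undifferentiated $\partial_\tau\gamma$ or a first derivative of $U$ using Lemma \ref{longlem-1}(1) and part (1), identify the surviving term as $R(\partial_\tau\bar w,\bar w)u$ via Lemma \ref{longlem-1}(2), and integrate the resulting ODE along the constant curve. The only cosmetic difference is the order in which you perform the commutations; the bookkeeping and the final antisymmetry argument for the Riemannian case match the paper's.
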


\begin{proof}
Since the family is parallel, we have $\partial_\tau U=0$. Therefore, we have 
\[
0=\partial_s\partial_\tau U=R(\partial_s\gamma,\partial_\tau\gamma)U+\partial_\tau\partial_sU. 
\]
If we evaluate at $s=t=0$ and apply (1) of Lemma \ref{longlem-1}, then we have 
\[
\partial_\tau\partial_sU\Big|_{s=t=0}=0. 
\]
Therefore, $\partial_sU\Big|_{s=t=0}$ is constant in $\tau$ and we have 
\[
\partial_sU\Big|_{s=t=0}=\partial_sU\Big|_{\tau=s=t=0}=\partial_su=0. 
\]

For the proof of (2), we apply $\partial_\tau U=0$ again and get 
\[
\begin{split}
\partial_\tau\partial_t^2U\Big|_{s=t=0}
&=\partial_t\partial_\tau\partial_tU+R(\partial_\tau\gamma,\partial_t\gamma)\partial_tU\Big|_{s=t=0}\\
&=\partial_t^2\partial_\tau U +\partial_t(R(\partial_\tau\gamma,\partial_t\gamma)U) +R(\partial_\tau\gamma,\partial_t\gamma)\partial_tU \Big|_{s=t=0}\\
\end{split}
\]

By (1) and (2) of Lemma \ref{longlem-1}, the above equation becomes 
\[
\begin{split}
\partial_\tau\partial_t^2U\Big|_{s=t=0}
&=\partial_t^2\partial_\tau U +R(\partial_t\partial_\tau\gamma,\partial_t\gamma)U \Big|_{s=t=0}\\
&=R(\partial_\tau\bar v,\bar v)u. 
\end{split}
\]

If we integrate with respect to the $\tau$-variable, then 
\[
\begin{split}
\partial_t^2U\Big|_{s=t=0,\tau=\bar\tau}&=\partial_t^2U\Big|_{\tau=s=t=0} 
+\int_0^{\bar\tau} R(\partial_\tau\bar v,\bar v)ud\tau\\
&=\int_0^{\bar\tau} R(\partial_\tau\bar v,\bar v)ud\tau.
\end{split}
\]

By (2') of Lemma \ref{longlem-1}, $\bar v=\tau v$ in the Riemannian case. It follows from the skew symmetry of the Riemannian curvature that 
\[
\partial_t^2U\Big|_{s=t=0}=0. 
\]
\end{proof}

\begin{lem}\label{longlem-3}
The family of Jacobi fields $J$ satisfies 
\[
J\Big|_{s=t=0}=\tilde u
\]
In particular, if we are in the Riemannian case where the potential $V=0$, then we have 
\[
J\Big|_{s=t=0}=(1-\tau)u.
\]
\end{lem}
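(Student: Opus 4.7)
The plan is to evaluate the defining Jacobi equation for $J_{s,t}$ at $s=t=0$ and observe it reduces to exactly the ODE that defines $\tilde u$, then invoke uniqueness of the boundary value problem.

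First I would recall that by construction $J = J_{s,t}$ satisfies
\[
\partial_\tau^2 J + R(\partial_\tau\gamma, J)\partial_\tau\gamma + \Hess V_\gamma(J) = 0,
\]
together with the boundary data $J_{s,t}(0)=u$, $J_{s,t}(1)=0$, and $J_{s,t}(\tau)\neq 0$ on $(0,1)$. Now specialize to $s=t=0$. By Lemma \ref{longlem-1}(1) we have $\partial_\tau\gamma|_{s=t=0}=0$, so the curvature term drops out, and since the constant curve $\gamma|_{s=t=0}\equiv x$ stays at the critical point, $\Hess V_\gamma$ becomes $\Hess V_x$. Hence $J|_{s=t=0}$ satisfies
\[
\partial_\tau^2 J + \Hess V_x(J) = 0, \qquad J(0)=u,\ J(1)=0,
\]
which is precisely the boundary value problem defining $\tilde u$.

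To conclude $J|_{s=t=0}=\tilde u$ I would invoke uniqueness: the difference of two solutions to this linear boundary value problem is a solution vanishing at both endpoints, hence identically zero by the standing assumption that nonzero solutions of this ODE do not vanish at both $\tau=0$ and $\tau=1$ (which is the same non-degeneracy hypothesis used to define $\tilde u$, and which is guaranteed here because $x$ is a critical point where $\Hess V$ is non-positive, so the operator $\partial_\tau^2 + \Hess V_x$ admits no nontrivial Dirichlet eigenfunction on $[0,1]$ at eigenvalue $0$).

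For the Riemannian case $V\equiv 0$, the equation collapses to $\partial_\tau^2 \tilde u = 0$ with $\tilde u(0)=u$, $\tilde u(1)=0$, whose unique affine solution is $\tilde u(\tau)=(1-\tau)u$. The only potential obstacle is the uniqueness step, but as noted it follows immediately from the sign of $\Hess V_x$ at a maximum of $V$, so the argument is essentially a direct verification.
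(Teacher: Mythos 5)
Your proof is correct and follows essentially the same route as the paper: specialize the Jacobi equation at $s=t=0$, use Lemma \ref{longlem-1}(1) to drop the curvature term, and match the resulting boundary value problem with the one defining $\tilde u$. Your extra remark justifying uniqueness via the non-positivity of $\Hess V_x$ at a maximum is a welcome addition that the paper only supplies earlier, in the proof of Theorem \ref{main1}.
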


\begin{proof}
Recall that the Jacobi equation is 
\[
\partial_\tau^2 J+R(\partial_\tau\gamma,J)\partial_\tau\gamma+\Hess V_\gamma(J)=0. 
\]
If we set $s=t=0$ and apply (1) of Lemma \ref{longlem-1}, then the above equation becomes 
\[
\partial_\tau^2 J+\Hess V_x(J)\Big|_{s=t=0}=0. 
\]
We also have the boundary conditions $J\Big|_{\tau=0}=u$ and $J\Big|_{\tau=1}=0$. 
It follows that $J\Big|_{s=t=0}=\tilde u$. 

In the Riemannian case, $V\equiv 0$ and the above equation becomes $\partial_\tau^2 J\Big|_{s=t=0}=0$. If we combine this with the boundary conditions, we have $J\Big|_{s=t=0}=(1-\tau)u$. 
\end{proof}

\section{Appendix 2: Lemmas for the Riemannian case}

Let us first recall and specialize our notations used in the previous appendix to the Riemannian case. Let $u$, $v$, and $w$ be tangent vectors at a point $x$ and let $\tau\mapsto\gamma_{s,t}(\tau):=\exp(\tau(tv+sw))$ be the geodesic with initial velocity $tv+sw$. Let $\tau\mapsto U_{s,t}(\tau)$ be the parallel translation of the vector $u$ along the curve $\tau\mapsto \gamma_{s,t}(\tau)$. Let $\tau\mapsto J_{s,t}(\tau)$ be the Jacobi field defined along the curve $\tau\mapsto\gamma_{s,t}(\tau)$ which satisfies the conditions $J_{s,t}(0)=u$, $J_{s,t}(1)=0$, and $J_{s,t}(\tau)\neq 0$ for all $\tau$ in the interval $(0,1)$.

\begin{lem}\label{longlem-4}
The family of geodesics $\gamma$ satisfies the following:
\begin{enumerate}
\item $\partial_s^2\partial_\tau\gamma\Big|_{s=t=0}=\partial_t^2\partial_\tau\gamma\Big|_{s=t=0}=0$,
\item $\partial_t\partial_s\partial_\tau\gamma\Big|_{s=t=0}=\partial_s\partial_t\partial_\tau\gamma\Big|_{s=t=0}=0$. 
\item $\partial_t\partial_s^2\partial_\tau\gamma\Big|_{s=t=0}=\tau^2 R(v,w)w$, \quad $\partial_s\partial_t^2\partial_\tau\gamma\Big|_{s=t=0}=\tau^2 R(w,v)v$, 
\item $\partial_t^2\partial_s\partial_\tau\gamma\Big|_{s=t=0}=\tau^2 R(v,w)v$, \quad  $\partial_s^2\partial_t\partial_\tau\gamma\Big|_{s=t=0}=\tau^2 R(w,v)w$. 
\end{enumerate}
\end{lem}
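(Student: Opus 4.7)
I will work in Riemann normal coordinates centred at $x$, in which $\gamma_{s,t}(\tau) = \tau(tv+sw)$ and therefore the three coordinate vector fields along $\gamma$ take the constant-in-$(s,t)$ expressions $\Phi_\tau = tv+sw$, $\Phi_s = \tau w$, $\Phi_t = \tau v$. A covariant derivative along the map differs from the coordinate partial only by a Christoffel contribution, $\nabla_a X = \partial_a X + \Gamma^k_{ij}(y)(\Phi_a)^i X^j\partial_k$ with $y = \tau(tv+sw)$. The only facts about $\Gamma$ I will need in these coordinates are $\Gamma^k_{ij}(0)=0$ and the classical identity
\[
\partial_\ell\Gamma^k_{ij}(0) \,=\, -\tfrac{1}{3}\bigl(R^k_{\ ij\ell}(0) + R^k_{\ ji\ell}(0)\bigr).
\]

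For parts (1) and (2), I will expand $\nabla_s^2\Phi_\tau$, $\nabla_t^2\Phi_\tau$, and $\nabla_t\nabla_s\Phi_\tau$ by Leibniz. Every resulting term carries, as a factor, either a Christoffel symbol $\Gamma(y)$ or the $\Phi_\tau$-component $(tv+sw)$. At $s=t=0$ we have $y=0$ and $tv+sw=0$, so each term dies and (1), (2) follow at once.

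Parts (3) and (4) are reached by one further differentiation, which for the first time can hit a Christoffel symbol and produce a $\partial_\ell\Gamma(0)$-contribution. The plan for (3) is to use $\nabla_s\Phi_\tau = \nabla_\tau\Phi_s$ and the curvature commutator $[\nabla_a,\nabla_b]=R(\Phi_a,\Phi_b)$ to rewrite
\[
\nabla_t\nabla_s^2\Phi_\tau \;=\; \nabla_\tau\nabla_t\nabla_s\Phi_s \;+\; R(\Phi_t,\Phi_\tau)\nabla_s\Phi_s \;+\; \nabla_t\bigl[R(\Phi_s,\Phi_\tau)\Phi_s\bigr].
\]
At $s=t=0$ the middle summand vanishes because $\Phi_\tau = 0$, and the Leibniz expansion of the third summand leaves only $R(\Phi_s,\nabla_t\Phi_\tau)\Phi_s|_{s=t=0} = \tau^2 R(w,v)w = -\tau^2R(v,w)w$. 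For the first summand I compute $\nabla_t\nabla_s\Phi_s|_{s=t=0}$ in normal coordinates: after discarding terms annihilated by $\Gamma(0)=0$ only the contraction $\tau^3 v^\ell\partial_\ell\Gamma^k_{ij}(0) w^iw^j\partial_k$ survives, and the normal-coordinate formula together with the Riemann symmetries turns this into $\tfrac{2}{3}\tau^3 R(v,w)w$. Differentiating once in $\tau$ along the constant curve $\gamma|_{s=t=0}\equiv x$ yields $2\tau^2 R(v,w)w$. Summing the two nonzero contributions gives the announced $\tau^2 R(v,w)w$. Part (4) follows along the analogous route $\nabla_t^2\nabla_s\Phi_\tau = \nabla_\tau\nabla_t^2\Phi_s + (\text{pieces killed by }\Phi_\tau|_{s=t=0}=0)$; the inner identity $\nabla_t^2\Phi_s|_{s=t=0} = \nabla_s\nabla_t\Phi_t|_{s=t=0} + R(\Phi_t,\Phi_s)\Phi_t|_{s=t=0} = \tfrac{1}{3}\tau^3 R(v,w)v$ is obtained by the same normal-coordinate reduction, and then one more $\tau$-differentiation produces $\tau^2 R(v,w)v$. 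The remaining identities are obtained by exchanging the roles of $(s,w)$ and $(t,v)$.

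The main obstacle is organisational: several terms survive $\Gamma(0)=0$ only because $\partial_\ell\Gamma(0)\neq 0$, and the two resulting contractions $v^\ell w^iw^j\partial_\ell\Gamma^k_{ij}(0)$ and $w^\ell v^iv^j\partial_\ell\Gamma^k_{ij}(0)$ have to be combined into a single Riemann tensor expression using the $j\leftrightarrow\ell$ antisymmetry of $R^k_{\ ij\ell}$ and the first Bianchi identity. Once a Riemann-tensor sign convention is fixed, the entire calculation is mechanical.
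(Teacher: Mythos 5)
Your argument is correct, but it follows a genuinely different route from the paper's. The paper never leaves the intrinsic setting: it repeatedly applies the commutation identity $[\partial_a,\partial_b]X=R(\partial_a\gamma,\partial_b\gamma)X$ together with the geodesic equation $\partial_\tau^2\gamma=0$ and Lemma \ref{longlem-1} to show that $\partial_\tau$ of each quantity in (3)--(4) equals $2\tau R(v,w)w$ (resp.\ $2\tau R(v,w)v$), and then integrates in $\tau$ using the vanishing of the data at $\tau=0$; parts (1)--(2) are handled the same way ($\partial_\tau(\cdot)|_{s=t=0}=0$ plus constancy in $\tau$), with (1) also exploiting that $s\mapsto\gamma_{s,0}(\tau)$ is itself a geodesic. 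You instead inject one piece of extrinsic input --- the normal-coordinate expansion $\Gamma(0)=0$, $\partial\Gamma(0)=-\tfrac13(R+R)$ --- to evaluate the ``seed'' quantities $\nabla_t\nabla_s\Phi_s|_{s=t=0}=\tfrac23\tau^3R(v,w)w$ and $\nabla_t^2\Phi_s|_{s=t=0}=\tfrac13\tau^3R(v,w)v$ directly, and then transport them to $\nabla_t\nabla_s^2\Phi_\tau$ and $\nabla_t^2\nabla_s\Phi_\tau$ by a single curvature commutator. I checked both seed values against the paper's answers and they are consistent, and your bookkeeping of which commutator terms survive at $s=t=0$ (only $R(\Phi_s,\nabla_t\Phi_\tau)\Phi_s=-\tau^2R(v,w)w$ in part (3); nothing besides $\nabla_\tau\nabla_t^2\Phi_s$ in part (4), since $R(\tau v,v)\tau w=0$) is right. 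What your approach buys is a shorter derivation of (1)--(2) and a conceptual explanation of where the $\tau^2$ and the curvature come from (the first nonvanishing Taylor coefficient of $\Gamma$); what it costs is the sign- and index-convention sensitivity of the identity $\partial_\ell\Gamma^k_{ij}(0)=-\tfrac13(R^k_{\ ij\ell}+R^k_{\ ji\ell})$, which you correctly flag as the one place where an error could silently flip a sign --- the paper's intrinsic integration avoids this entirely. If you write this up, fix the curvature convention explicitly and carry out the contraction $v^\ell w^iw^j\partial_\ell\Gamma^k_{ij}(0)$ in full once, since everything else reduces to it by the $(s,w)\leftrightarrow(t,v)$ symmetry.
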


\begin{proof}
For (1), we have 
\begin{equation}\label{longlem-4-1}
\partial_s^2\partial_\tau\gamma=\partial_\tau\partial_s^2\gamma+R(\partial_s\gamma,\partial_\tau\gamma)\partial_s\gamma. 
\end{equation}
Since $\gamma\Big|_{t=0}=\exp(s\tau w)$ is a geodesic in the variable $s$, we have 
\[
\partial_s^2\gamma\Big|_{s=t=0}=0.
\] 
Therefore, if we set $s=t=0$, then (\ref{longlem-4-1}) becomes 
\[
\partial_s^2\partial_\tau\gamma\Big|_{s=t=0}=R(\partial_s\gamma,\partial_\tau\gamma)\partial_s\gamma\Big|_{s=t=0}. 
\]
Finally, if we apply (1) of Lemma \ref{longlem-1}, then we obtain (1). 

For (2), we have 
\[
\begin{split}
&\partial_\tau\partial_t\partial_s\partial_\tau\gamma\Big|_{s=t=0}\\ &=R(\partial_\tau\gamma,\partial_t\gamma)\partial_s\partial_\tau\gamma +\partial_t\partial_\tau\partial_s\partial_\tau\gamma\Big|_{s=t=0}\\
&=R(\partial_\tau\gamma,\partial_t\gamma)\partial_s\partial_\tau\gamma +\partial_t(R(\partial_\tau\gamma,\partial_s\gamma)\partial_\tau\gamma) +\partial_t\partial_s\partial_\tau^2\gamma\Big|_{s=t=0}. 
\end{split}
\]
If we apply (1) of Lemma \ref{longlem-1}, then the above equation becomes 
\[
\partial_\tau\partial_t\partial_s\partial_\tau\gamma\Big|_{s=t=0} =\partial_t\partial_s\partial_\tau^2\gamma\Big|_{s=t=0}.
\]
Since $\gamma$ is a geodesic for each $t$ and $s$ (i.e. $\partial_\tau^2\gamma=0$), it follows that 
\[
\partial_\tau\partial_t\partial_s\partial_\tau\gamma\Big|_{s=t=0}=0.
\]

Therefore, $\partial_t\partial_s\partial_\tau\gamma\Big|_{s=t=0}$ is constant in $\tau$ and we have 
\[
\partial_t\partial_s\partial_\tau\gamma\Big|_{s=t=0}=\partial_t\partial_s\partial_\tau\gamma\Big|_{\tau=s=t=0}=\partial_t\partial_s(tv+sw)\Big|_{s=t=0}=0. 
\]

For (3), we have 
\[
\begin{split} 
&\partial_\tau\partial_t\partial_s^2\partial_\tau\gamma\Big|_{s=t=0} \\
&=R(\partial_\tau\gamma, \partial_t\gamma)\partial_s^2\partial_\tau\gamma +\partial_t\partial_\tau\partial_s^2\partial_\tau\gamma\Big|_{s=t=0}
\end{split}
\]

By (1) of Lemma \ref{longlem-1}, the above equation becomes 
\[
\partial_\tau\partial_t\partial_s^2\partial_\tau\gamma\Big|_{s=t=0} =\partial_t\partial_\tau\partial_s^2\partial_\tau\gamma\Big|_{s=t=0}.
\]

If we apply (1) of Lemma \ref{longlem-1} again to the above equation, then we obtain
\[
\begin{split} 
\partial_\tau\partial_t\partial_s^2\partial_\tau\gamma\Big|_{s=t=0} 
&=\partial_t\partial_s\partial_\tau\partial_s\partial_\tau\gamma
+ \partial_t(R(\partial_\tau\gamma,\partial_s\gamma)\partial_s\partial_\tau\gamma)\Big|_{s=t=0}\\
&=\partial_t\partial_s\partial_\tau\partial_s\partial_\tau\gamma
+ R(\partial_t\partial_\tau\gamma,\partial_s\gamma)\partial_s\partial_\tau\gamma\Big|_{s=t=0}. 
\end{split}
\]

By (2') of Lemma \ref{longlem-1}, we have 
\[
\partial_\tau\partial_t\partial_s^2\partial_\tau\gamma\Big|_{s=t=0} 
=\partial_t\partial_s\partial_\tau\partial_s\partial_\tau\gamma
+ \tau R(v,w)w\Big|_{s=t=0}
\]

Since $\tau\mapsto\gamma$ is a geodesic, the above equation becomes 
\[
\begin{split} 
\partial_\tau\partial_t\partial_s^2\partial_\tau\gamma\Big|_{s=t=0} 
&=\partial_t\partial_s(R(\partial_\tau\gamma,\partial_s\gamma)\partial_\tau\gamma)
+ \tau R(v,w)w\Big|_{s=t=0}.\\
\end{split}
\]

If we apply (1) and (2') of Lemma \ref{longlem-1}, then we have 
\[
\begin{split} 
&\partial_\tau\partial_t\partial_s^2\partial_\tau\gamma\Big|_{s=t=0} \\
&=R(\partial_t\partial_\tau\gamma,\partial_s\gamma)\partial_s\partial_\tau\gamma +R(\partial_s\partial_\tau\gamma,\partial_s\gamma)\partial_t\partial_\tau\gamma
+ \tau R(v,w)w\Big|_{s=t=0}\\
&=2\tau R(v,w)w. 
\end{split}
\]

If we integrate the above equation in $\tau$, then we get 
\[
\begin{split}
\partial_t\partial_s^2\partial_\tau\gamma\Big|_{s=t=0} 
&=\tau^2 R(v,w)w+\partial_t\partial_s^2\partial_\tau\gamma\Big|_{\tau=s=t=0} \\
&=\tau^2 R(v,w)w+\partial_t\partial_s^2(tv+sw)\Big|_{\tau=s=t=0} \\
&=\tau^2 R(v,w)w. 
\end{split}
\]
This finishes the proof of (3). 

For (4), we first apply (1) of Lemma \ref{longlem-1}. 
\[
\begin{split} 
\partial_\tau\partial_t^2\partial_s\partial_\tau\gamma\Big|_{s=t=0} 
&=R(\partial_\tau\gamma, \partial_t\gamma)\partial_t\partial_s\partial_\tau\gamma +\partial_t\partial_\tau\partial_t\partial_s\partial_\tau\gamma\Big|_{s=t=0} \\ 
&=\partial_t\partial_\tau\partial_t\partial_s\partial_\tau\gamma\Big|_{s=t=0}. 
\end{split}
\]

By (1) of Lemma \ref{longlem-1} again, the above equation becomes 
\[
\begin{split} 
\partial_\tau\partial_t^2\partial_s\partial_\tau\gamma\Big|_{s=t=0} 
&=\partial_t^2\partial_\tau\partial_s\partial_\tau\gamma
+ \partial_t(R(\partial_\tau\gamma,\partial_t\gamma)\partial_s\partial_\tau\gamma)\Big|_{s=t=0}\\
&=\partial_t^2\partial_\tau\partial_s\partial_\tau\gamma
+ R(\partial_t\partial_\tau\gamma,\partial_t\gamma)\partial_s\partial_\tau\gamma\Big|_{s=t=0}. 
\end{split}
\]

By (2') of Lemma \ref{longlem-1}, we have 
\[
\begin{split} 
\partial_\tau\partial_t^2\partial_s\partial_\tau\gamma\Big|_{s=t=0} 
&=\partial_t^2\partial_\tau\partial_s\partial_\tau\gamma
+ \tau R(v,v)w\Big|_{s=t=0}\\
&=\partial_t^2\partial_\tau\partial_s\partial_\tau\gamma\Big|_{s=t=0}.
\end{split}
\]

Since $\tau\mapsto\gamma$ is a geodesic, we get 
\[
\begin{split} 
\partial_\tau\partial_t^2\partial_s\partial_\tau\gamma\Big|_{s=t=0} 
&=\partial_t^2\partial_\tau\partial_\tau^2\gamma +\partial_t^2(R(\partial_\tau\gamma,\partial_s\gamma)\partial_\tau\gamma)\Big|_{s=t=0}\\
&=\partial_t^2(R(\partial_\tau\gamma,\partial_s\gamma)\partial_\tau\gamma)\Big|_{s=t=0}.
\end{split}
\]

By (1) and (2') of Lemma \ref{longlem-1}, the above equation becomes 
\[
\begin{split} 
\partial_\tau\partial_t^2\partial_s\partial_\tau\gamma\Big|_{s=t=0} 
&=2R(\partial_t\partial_\tau\gamma,\partial_s\gamma)\partial_t\partial_\tau\gamma\Big|_{s=t=0}\\
&=2\tau R(v,w)v\Big|_{s=t=0}.
\end{split}
\]

Finally, if we integrate the above equation in $\tau$, then we obtain 
\[
\begin{split}
\partial_t^2\partial_s\partial_\tau\gamma\Big|_{s=t=0} 
&=\tau^2 R(v,w)v+\partial_t^2\partial_s\partial_\tau\gamma\Big|_{\tau=s=t=0} \\
&=\tau^2 R(v,w)v+\partial_t^2\partial_s(tv+sw)\Big|_{s=t=0} \\
&=\tau^2 R(v,w)v. 
\end{split}
\]

\end{proof}

\begin{lem}\label{longlem-5}
The family of parallel vector fields $U$ satisfies 
\[
\partial_t\partial_sU\Big|_{s=t=0}=\frac{\tau^2}{2} R(v,w)u, \quad \partial_s\partial_tU\Big|_{s=t=0}=\frac{\tau^2}{2} R(w,v)u.
\]
\end{lem}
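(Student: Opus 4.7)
The plan is to mimic the strategy used for Lemmas \ref{longlem-2} and \ref{longlem-4}: start from the defining relation $\partial_\tau U = 0$, differentiate in the two transverse parameters, commute covariant derivatives using the curvature, and kill most terms at $s=t=0$ via Lemma \ref{longlem-1}(1) (which says $\partial_\tau\gamma = 0$ there) together with Lemma \ref{longlem-2}(1) (which says $\partial_sU=\partial_tU=0$ there).

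\textbf{Step 1.} Differentiating $\partial_\tau U = 0$ in $s$ and applying the curvature commutator $[\partial_s,\partial_\tau]U = R(\partial_s\gamma,\partial_\tau\gamma)U$ gives
\[
\partial_\tau\partial_s U = -R(\partial_s\gamma,\partial_\tau\gamma)U,
\]
exactly as in the proof of Lemma \ref{longlem-2}.

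\textbf{Step 2.} Apply $\partial_t$ and commute once more:
\[
\partial_\tau\partial_t\partial_s U \;=\; \partial_t\partial_\tau\partial_s U - R(\partial_t\gamma,\partial_\tau\gamma)\partial_s U \;=\; -\partial_t\bigl(R(\partial_s\gamma,\partial_\tau\gamma)U\bigr) - R(\partial_t\gamma,\partial_\tau\gamma)\partial_s U.
\]
At $s=t=0$ the last term vanishes by Lemma \ref{longlem-1}(1). Expanding $\partial_t\bigl(R(\partial_s\gamma,\partial_\tau\gamma)U\bigr)$ by the Leibniz rule produces four summands; the ones containing the factor $\partial_\tau\gamma$ die at $s=t=0$ by Lemma \ref{longlem-1}(1), and the one containing $\partial_t U$ dies by Lemma \ref{longlem-2}(1). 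Only $R(\partial_s\gamma,\partial_t\partial_\tau\gamma)U$ survives.

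\textbf{Step 3.} By Lemma \ref{longlem-1}(2') we have $\partial_s\gamma\big|_{s=t=0}=\tau w$ and $\partial_t\partial_\tau\gamma\big|_{s=t=0}=v$, while $U\big|_{s=t=0}\equiv u$ since $U$ is parallel along the constant curve $\gamma\equiv x$. Hence
\[
\partial_\tau\partial_t\partial_s U\Big|_{s=t=0} \;=\; -R(\tau w,v)u \;=\; \tau\,R(v,w)u.
\]

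\textbf{Step 4.} Integrate in $\tau$ from $0$ to $\bar\tau$. The boundary term at $\tau=0$ is zero because $U\big|_{\tau=0}=u$ is constant in $s,t$, so $\partial_t\partial_s U\big|_{\tau=0}=0$. This yields
\[
\partial_t\partial_s U\Big|_{s=t=0} \;=\; \int_0^{\tau} \sigma\,R(v,w)u\,d\sigma \;=\; \frac{\tau^2}{2}R(v,w)u,
\]
which is the first identity. The second identity follows by the same argument with the roles of $s,t$ (equivalently $v,w$) interchanged, producing $R(w,v)u$ in place of $R(v,w)u$.

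The main obstacle is bookkeeping: one must carefully distinguish covariant from ordinary derivatives, correctly apply the curvature commutator when swapping $\partial_t$ with $\partial_\tau$, and verify that exactly one term in the Leibniz expansion survives at $s=t=0$. No genuinely new input beyond Lemmas \ref{longlem-1} and \ref{longlem-2} is required.
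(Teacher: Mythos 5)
Your proposal is correct and follows essentially the same route as the paper: differentiate the parallel-transport equation $\partial_\tau U=0$, commute the covariant derivatives via the curvature, kill all but one term at $s=t=0$ using Lemma \ref{longlem-1}(1) and Lemma \ref{longlem-2}(1), identify the surviving term as $\tau R(v,w)u$ via Lemma \ref{longlem-1}(2'), and integrate in $\tau$ with the vanishing initial value $\partial_t\partial_s U\big|_{\tau=0}=0$. The only difference is cosmetic (the order in which you apply the two commutators), so no further comment is needed.
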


\begin{proof}
By (1) of Lemma \ref{longlem-1}, we have 
\[
\begin{split}
\partial_\tau\partial_t\partial_s U\Big|_{s=t=0}
&=\partial_t\partial_\tau\partial_sU+R(\partial_\tau\gamma,\partial_t\gamma)\partial_sU\Big|_{s=t=0}\\
&=\partial_t\partial_\tau\partial_sU\Big|_{s=t=0}. 
\end{split}
\]

If we apply (1) of Lemma \ref{longlem-1} again, then the above becomes 
\[
\begin{split}
\partial_\tau\partial_t\partial_s U\Big|_{s=t=0}
&=\partial_t\partial_s\partial_\tau U+\partial_t(R(\partial_\tau\gamma,\partial_s\gamma)U)\Big|_{s=t=0}\\
&=\partial_t\partial_s\partial_\tau U+R(\partial_t\partial_\tau\gamma,\partial_s\gamma)U\Big|_{s=t=0}. 
\end{split}
\]

Since $\tau\mapsto U$ is a parallel vector field, we get 
\[
\begin{split}
\partial_\tau\partial_t\partial_s U\Big|_{s=t=0}
&=R(\partial_t\partial_\tau\gamma,\partial_s\gamma)U\Big|_{s=t=0}.
\end{split}
\]

If we apply (1) and (2') of Lemma \ref{longlem-1}, then we have 
\[
\partial_\tau\partial_t\partial_s U\Big|_{s=t=0}=\tau R(v,w)u. 
\]

Since $U\Big|_{\tau=0}=u$, we can integrate the above equation in $\tau$ and obtain 
\[
\partial_t\partial_s U\Big|_{s=t=0}=\frac{\tau^2}{2}R(v,w)u+\partial_t\partial_s U\Big|_{\tau=s=t=0} =\frac{\tau^2}{2}R(v,w)u.
\]
\end{proof}

\begin{lem}\label{longlem-6}
The family of Jacobi fields $J$ satisfies 
\begin{enumerate}
\item $\partial_s J\Big|_{s=t=0}=\partial_t J\Big|_{s=t=0}=0$,
\item $\partial_t^2J\Big|_{s=t=0}=\frac{\tau(\tau-1)(\tau-2)}{3}R(v,u)v$, 
\item $\partial_s^2J\Big|_{s=t=0}=\frac{\tau(\tau-1)(\tau-2)}{3}R(w,u)w$, 
\item $\partial_t\partial_sJ\Big|_{s=t=0}=\frac{\tau(\tau-1)}{3} [(\tau-2)R(w,u)v-(\tau+1)R(v,w)u]$. 
\end{enumerate}
\end{lem}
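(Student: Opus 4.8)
The plan is to derive, for each of the four assertions, a linear second-order ODE in $\tau$ by differentiating the Jacobi equation in the parameters $s,t$ and evaluating at $s=t=0$. In the Riemannian case the Jacobi equation reads $\partial_\tau^2 J+R(\partial_\tau\gamma,J)\partial_\tau\gamma=0$. At $s=t=0$ we have, by Lemma~\ref{longlem-1}, $\gamma\equiv x$, $\partial_\tau\gamma=0$, $\partial_t\gamma=\tau v$, $\partial_s\gamma=\tau w$ (hence $\partial_t\partial_\tau\gamma=v$ and $\partial_s\partial_\tau\gamma=w$ by the symmetry lemma for maps), and $\partial_\tau^2\gamma\equiv 0$ because $\gamma$ is a geodesic; and by Lemma~\ref{longlem-3}, $J\big|_{s=t=0}=(1-\tau)u$, so $\partial_\tau J\big|_{s=t=0}=-u$. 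Since $J_{s,t}(0)=u$ and $J_{s,t}(1)=0$ hold for all $s,t$, every $s,t$-derivative of $J$ vanishes at $\tau=0$ and at $\tau=1$, so each of the ODEs below carries homogeneous Dirichlet boundary data and is solved by quadrature. The organizing principle is that commuting $\partial_t$ or $\partial_s$ past $\partial_\tau$ introduces curvature corrections of the form $R(\partial_t\gamma,\partial_\tau\gamma)(\cdot)$, and at $s=t=0$ a term of any of the resulting expansions survives only if it contains neither a bare $\partial_\tau\gamma$ nor a factor $\partial_t J$ or $\partial_s J$, all three of which vanish there (the last by part (1)).

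For (1): differentiating the Jacobi equation once in $t$, every term of $\partial_t\big(R(\partial_\tau\gamma,J)\partial_\tau\gamma\big)$ carries a factor $\partial_\tau\gamma$ and the commutator corrections involve $\partial_\tau\gamma$ or $\partial_\tau^2\gamma$; hence $\partial_\tau^2(\partial_t J)\big|_{s=t=0}=0$, and the homogeneous boundary conditions force $\partial_t J\big|_{s=t=0}=0$; replacing $v$ by $w$ gives $\partial_s J\big|_{s=t=0}=0$. For (2) (and (3) by symmetry): differentiating twice in $t$, the only term of $\partial_t^2\big(R(\partial_\tau\gamma,J)\partial_\tau\gamma\big)$ surviving at $s=t=0$ is $2R(\partial_t\partial_\tau\gamma,J)\partial_t\partial_\tau\gamma=2(1-\tau)R(v,u)v$, and all commutator corrections vanish because each collapses to a multiple of $R(v,v)=0$. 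Thus $\partial_\tau^2(\partial_t^2 J)=-2(1-\tau)R(v,u)v$ with zero endpoint values, which integrates to $\frac{\tau(\tau-1)(\tau-2)}{3}R(v,u)v$.

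For (4): differentiating once in $t$ and once in $s$, the surviving part of $\partial_t\partial_s\big(R(\partial_\tau\gamma,J)\partial_\tau\gamma\big)$ at $s=t=0$ is $R(\partial_s\partial_\tau\gamma,J)\partial_t\partial_\tau\gamma+R(\partial_t\partial_\tau\gamma,J)\partial_s\partial_\tau\gamma=(1-\tau)\big(R(w,u)v+R(v,u)w\big)$. Unlike in (2)--(3), the commutator corrections no longer cancel: tracing them through requires evaluating terms such as $R(\partial_s\gamma,\partial_t\partial_\tau\gamma)J=\tau(1-\tau)R(w,v)u$ (which enters via an outer $\partial_\tau$, contributing $(1-2\tau)R(w,v)u$) and $R(\partial_s\gamma,\partial_t\partial_\tau\gamma)\partial_\tau J=-\tau R(w,v)u$; these survive because $R(w,v)\ne 0$ in general, and they add up to a net correction $(1-3\tau)R(w,v)u$. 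Collecting everything gives $\partial_\tau^2(\partial_t\partial_s J)+(1-3\tau)R(w,v)u+(1-\tau)\big(R(w,u)v+R(v,u)w\big)=0$; the first Bianchi identity rewrites the source as $2(\tau-1)R(w,u)v-2\tau R(v,w)u$, and integrating with zero endpoint values yields $\frac{\tau(\tau-1)}{3}\big[(\tau-2)R(w,u)v-(\tau+1)R(v,w)u\big]$.

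I expect part (4) to be the main obstacle. There one cannot discard the iterated covariant-derivative commutators -- they genuinely contribute, unlike in parts (1)--(3) -- so one must identify precisely the few second-order terms in $\gamma$ and $J$ that survive at $s=t=0$, keep the bookkeeping straight, and then recognize via the first Bianchi identity that the source one obtains is exactly the one producing the asserted cubic-in-$\tau$ formula.
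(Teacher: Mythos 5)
Your proposal is correct and follows essentially the same route as the paper: differentiate the Jacobi equation in $s,t$, use $\partial_\tau\gamma\big|_{s=t=0}=0$, $\partial_t\gamma\big|_{s=t=0}=\tau v$, $\partial_s\gamma\big|_{s=t=0}=\tau w$, $J\big|_{s=t=0}=(1-\tau)u$ to reduce each case to $\partial_\tau^2(\cdot)=\text{source}$ with homogeneous Dirichlet data, and integrate; your sources (including the $(1-3\tau)$ commutator correction and the Bianchi rewriting in part (4)) agree with the paper's. No gaps.
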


\begin{proof}
The family $J$ satisfies the Jacobi equation 
\[
\partial_\tau^2J+R(\partial_\tau\gamma,J)\partial_\tau\gamma=0
\]
and the boundary conditions $J\Big|_{\tau=0}=u$ and $J\Big|_{\tau=1}=0$. 

If we differentiate the Jacobi equation with respect to $s$ and apply (1) of Lemma \ref{longlem-1}, then we have 
\[
\partial_s\partial_\tau^2J =-\partial_s(R(\partial_\tau\gamma,J)\partial_\tau\gamma)=0. 
\]

It follows that 
\[
\begin{split}
0&=\partial_s\partial_\tau^2J\Big|_{s=t=0}\\
&=\partial_\tau\partial_s\partial_\tau J +R(\partial_s\gamma,\partial_\tau\gamma)\partial_\tau J\Big|_{s=t=0}\\
&=\partial_\tau^2\partial_s J +R(\partial_s\gamma,\partial_\tau\gamma)\partial_\tau J +\partial_\tau(R(\partial_s\gamma,\partial_\tau\gamma)J)\Big|_{s=t=0}. 
\end{split}
\]

By (1) of Lemma \ref{longlem-1}, we have 
\[
\partial_\tau^2\partial_sJ\Big|_{s=t=0}=0. 
\]

This together with the boundary conditions 
$\partial_sJ\Big|_{\tau=0}=\partial_sJ\Big|_{\tau=1}=0$ give 
$\partial_s J\Big|_{s=t=0}=0$. This finishes the proof of (1). 

For (2), we differentiate the Jacobi equation with respect to $t$ twice and apply Lemma \ref{longlem-1}. 
\[
\begin{split}
\partial_t^2\partial_\tau^2J\Big|_{s=t=0}
&=-\partial_t^2(R(\partial_\tau\gamma,J)\partial_\tau\gamma)\Big|_{s=t=0}\\
&=-2R(\partial_t\partial_\tau\gamma,J)\partial_t\partial_\tau\gamma\Big|_{s=t=0}. 
\end{split}
\]
Therefore, by (2') of Lemma \ref{longlem-1} and Lemma \ref{longlem-3}, we have 
\begin{equation}\label{longlem-6-1}
\partial_t^2\partial_\tau^2J\Big|_{s=t=0}=-2(1-\tau)R(v,u)v. 
\end{equation}

On the other hand, by (1) of Lemma \ref{longlem-1}, we have 
\[
\begin{split}
\partial_t^2\partial_\tau^2 J\Big|_{s=t=0}
&=\partial_t\partial_\tau\partial_t\partial_\tau J
+\partial_t (R(\partial_t\gamma,\partial_\tau\gamma)\partial_\tau J)\Big|_{s=t=0}\\
&=\partial_t\partial_\tau\partial_t\partial_\tau J 
+R(\partial_t\gamma,\partial_t \partial_\tau\gamma)\partial_\tau J\Big|_{s=t=0}. 
\end{split}
\]

By (2') of Lemma \ref{longlem-1}, the above equation becomes 
\[
\begin{split}
\partial_t^2\partial_\tau^2 J\Big|_{s=t=0}
&=\partial_t\partial_\tau\partial_t\partial_\tau J 
+\tau R(v,v)\partial_\tau J\Big|_{s=t=0}\\
&=\partial_t\partial_\tau\partial_t\partial_\tau J\Big|_{s=t=0} \\
&=\partial_\tau\partial_t^2\partial_\tau J +R(\partial_t\gamma,\partial_\tau\gamma)\partial_t\partial_\tau J\Big|_{s=t=0}. 
\end{split}
\]

If we apply (1) of Lemma \ref{longlem-1}, then we have 
\[
\begin{split}
\partial_t^2\partial_\tau^2 J\Big|_{s=t=0}
&=\partial_\tau\partial_t^2\partial_\tau J\Big|_{s=t=0}\\
&=\partial_\tau\partial_t\partial_\tau\partial_t J 
+\partial_\tau\partial_t(R(\partial_t\gamma,\partial_\tau\gamma)J)\Big|_{s=t=0}\\ &=\partial_\tau\partial_t\partial_\tau\partial_t J \Big|_{s=t=0} +\partial_\tau(R(\partial_t\gamma,\partial_t\partial_\tau\gamma)J)\Big|_{s=t=0}. 
\end{split}
\]

By (2') of Lemma \ref{longlem-1}, we have 
\[
\begin{split}
\partial_t^2\partial_\tau^2 J\Big|_{s=t=0}
&=\partial_\tau\partial_t\partial_\tau\partial_t J \Big|_{s=t=0} +\partial_\tau(R(\tau v,v)J)\Big|_{s=t=0}\\
&=\partial_\tau\partial_t\partial_\tau\partial_t J \Big|_{s=t=0}. 
\end{split}
\]

By applying (1) of Lemma \ref{longlem-1}, the above equation becomes 
\[
\begin{split}
\partial_t^2\partial_\tau^2 J\Big|_{s=t=0}
&=\partial_\tau^2\partial_t^2 J +\partial_\tau(R(\partial_t\gamma,\partial_\tau\gamma)\partial_tJ)\Big|_{s=t=0}\\ &=\partial_\tau^2\partial_t^2 J\Big|_{s=t=0}. 
\end{split}
\]

If we combine this with (\ref{longlem-6-1}), then we have 
\[
\partial_\tau^2\partial_t^2 J=-2(1-\tau)R(v,u)v. 
\]

This together with the boundary conditions $\partial_t^2 J\Big|_{\tau=0}=\partial_t^2 J\Big|_{\tau=1}=0$ gives us
\[
\partial_t^2 J=\frac{\tau(\tau-1)(\tau-2)}{3} R(v,u)v. 
\]
This finishes the proof of (2). The proof of (3) is the same with $v$ and $w$ interchange. 

For (4), we apply (1) of Lemma \ref{longlem-1} and get 
\[
\begin{split}
&\partial_\tau^2\partial_t\partial_sJ\Big|_{s=t=0}\\
&=\partial_\tau\partial_t\partial_\tau\partial_sJ +\partial_\tau(R(\partial_\tau\gamma,\partial_t\gamma)\partial_sJ)\Big|_{s=t=0}\\
&=\partial_\tau\partial_t\partial_\tau\partial_sJ\Big|_{s=t=0}\\
\end{split}
\]

If we apply again (1) of Lemma \ref{longlem-1}, we get 
\[
\begin{split}
&\partial_\tau^2\partial_t\partial_sJ\Big|_{s=t=0}\\
&=\partial_t\partial_\tau^2\partial_sJ +R(\partial_\tau\gamma,\partial_t\gamma)\partial_\tau\partial_sJ\Big|_{s=t=0}\\
&=\partial_t\partial_\tau^2\partial_sJ\Big|_{s=t=0}\\
&=\partial_t\partial_\tau\partial_s\partial_\tau J +\partial_t\partial_\tau(R(\partial_\tau\gamma,\partial_s\gamma)J)\Big|_{s=t=0}.\\
\end{split}
\]

Using the fact that $\tau\mapsto \gamma$ is a geodesic and applying (1) of Lemma \ref{longlem-1}, we get 
\[
\begin{split}
&\partial_\tau^2\partial_t\partial_sJ\Big|_{s=t=0}\\
&=\partial_t\partial_\tau\partial_s\partial_\tau J +R(\partial_t\partial_\tau\gamma,\partial_\tau\partial_s\gamma)J) +R(\partial_t\partial_\tau\gamma,\partial_s\gamma)\partial_\tau J\Big|_{s=t=0}. 
\end{split}
\]

If we apply (2') of Lemma \ref{longlem-1} and Lemma \ref{longlem-3}, then we have
\[
\begin{split}
&\partial_\tau^2\partial_t\partial_sJ\Big|_{s=t=0}\\
&=\partial_t\partial_\tau\partial_s\partial_\tau J \Big|_{s=t=0}+(1-2\tau)R(v,w)u\\
&=\partial_t\partial_s\partial_\tau^2 J +\partial_t (R(\partial_\tau\gamma,\partial_s\gamma)\partial_\tau J)\Big|_{s=t=0}+(1-2\tau)R(v,w)u. 
\end{split}
\]

By (1), (2') of Lemma \ref{longlem-1}, and Lemma \ref{longlem-3}, we get 
\[
\begin{split}
&\partial_\tau^2\partial_t\partial_sJ\Big|_{s=t=0}\\
&=\partial_t\partial_s\partial_\tau^2 J\Big|_{s=t=0}+(1-3\tau)R(v,w)u. 
\end{split}
\]

If we apply the Jacobi equation and (1) of Lemma \ref{longlem-1}, then 
\[
\begin{split}
&\partial_\tau^2\partial_t\partial_sJ\Big|_{s=t=0}\\
&=-\partial_t\partial_s(R(\partial_\tau\gamma,J)\partial_\tau\gamma)\Big|_{s=t=0}+(1-3\tau)R(v,w)u\\
&=-R(\partial_s\partial_\tau\gamma,J)\partial_t\partial_\tau\gamma) -R(\partial_t\partial_\tau\gamma,J)\partial_s\partial_\tau\gamma)\Big|_{s=t=0}\\ 
&+(1-3\tau)R(v,w)u\\
\end{split}
\]

By (2') of Lemma \ref{longlem-1} and Lemma \ref{longlem-3}, we have 
\[
\begin{split}
&\partial_\tau^2\partial_t\partial_sJ\Big|_{s=t=0}\\
&=-(1-\tau)(R(w,u)v+R(v,u)w)+(1-3\tau)R(v,w)u.
\end{split}
\]

Finally, by the first Bianchi identity, we have 
\[
\partial_\tau^2\partial_t\partial_sJ\Big|_{s=t=0}=-2(1-\tau)R(w,u)v-2\tau R(v,w)u. 
\]
Using this and the boundary conditions $\partial_t\partial_sJ\Big|_{\tau=0}=\partial_t\partial_sJ\Big|_{\tau=1}=0$, we have 
\[
\partial_t\partial_s J=\frac{\tau(\tau-1)}{3} [(\tau-2)R(w,u)v-(\tau+1)R(v,w)u]. 
\]

\end{proof}

\section*{Acknowledgment}
The author would like to thank Professor Andrei Agrachev, Robert Bryant, and Robert McCann for their helpful suggestions and fruitful discussions.

\end{document}